\DeclareMathOperator{\Con}{Con}
\DeclareMathOperator{\BCon}{\mathbf{Con}}
\newtheorem{theorem}{Theorem}
\newtheorem{definition}[theorem]{Definition}
\newtheorem{lemma}[theorem]{Lemma}
\newtheorem{proposition}[theorem]{Proposition}
\newtheorem{remark}[theorem]{Remark}
\newtheorem{example}[theorem]{Example}
\newtheorem{corollary}[theorem]{Corollary}
\title{Remarks on special congruences}
\author{Ivan~Chajda and Helmut~L\"anger}
\date{}
\begin{document}

\footnotetext{Support of the research of both authors by the Austrian Science Fund (FWF), project I~4579-N, and the Czech Science Foundation (GA\v CR), project 20-09869L, entitled ``The many facets of orthomodularity'', is gratefully acknowledged.}

\maketitle

\begin{abstract}
We study algebras and varieties where every non-trivial congruence has some class being a non-trivial subuniverse of the algebra in question. Then we focus on algebras where this non-trivial class is a unique non-singleton class of the congruence. In particular, we investigate Rees algebras, quasi-Rees algebras and algebras having the one-block-property. We also present results concerning these properties on quotient algebras. Several examples of such algebras and varieties are included.
\end{abstract}

{\bf AMS Subject Classification:} 08A30, 08A40, 08B05, 08B20

{\bf Keywords:} congruence class, subuniverse, Rees algebra, Rees variety, quasi-Rees algebra, one-block-property

Congruences having the property that one of its classes is a subuniverse were studied by B.~Cs\'ak\'any \cite{C75} and R.~F.~Tichy \cite T who introduced the concept of Rees congruences, see also \cite{CD}. This concept was generalized for algebras and varieties satisfying the so-called one-block-property in \cite{AK}. The aim of the present paper is to modify these concepts and prove some results also for single algebras.

Let $\mathbf A=(A,F)$ be an algebra. The {\em algebra} $\mathbf A$ is called {\em non-trivial} if $|A|>1$. Denote by $\BCon\mathbf A=(\Con\mathbf A,\subseteq)$ the congruence lattice of $\mathbf A$ and by $\omega_A$ the least congruence on $\mathbf A$. A {\em congruence} $\Theta$ on $\mathbf A$ is called {\em non-trivial} if $\Theta\neq\omega_A$.

B.~Cs\'ak\'any \cite{C75} characterized those varieties $\mathcal V$ having the property that every congruence on an algebra $\mathbf A$ of $\mathcal V$ has some class being a subuniverse of $\mathbf A$. His result is as follows (see also Theorem~4.2.1 in \cite{CEL}).

\begin{proposition}\label{prop1}
For a variety $\mathcal V$ the following are equivalent:
\begin{enumerate}[{\rm(i)}]
\item Each congruence on a non-trivial member of $\mathcal V$ has some class being a subuniverse.
\item There exists some {\rm(}at most{\rm)} unary term $v(x)$ such that $\mathcal V$ satisfies the identity
\[
f\big(v(x),\ldots,v(x)\big)\approx v(x)
\]
for all fundamental operations $f$.
\end{enumerate}
\end{proposition}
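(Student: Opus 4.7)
The plan is to prove the two implications separately; the reverse direction is a short compatibility computation, while the forward direction is the standard ``free-algebra trick''.

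For (ii)$\Rightarrow$(i), I would fix a non-trivial $\mathbf A\in\mathcal V$, an arbitrary $\Theta\in\Con\mathbf A$, and any $a\in A$, and show that the congruence class $[v(a)]_\Theta$ is the desired subuniverse. Indeed, given $b_1,\dots,b_n\in[v(a)]_\Theta$ and any fundamental operation $f$, compatibility of $\Theta$ yields $f(b_1,\dots,b_n)\mathrel\Theta f\big(v(a),\dots,v(a)\big)$, and the identity of (ii) rewrites the right-hand side as $v(a)$. Hence $f(b_1,\dots,b_n)\in[v(a)]_\Theta$, so this block is closed under every fundamental operation.

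For (i)$\Rightarrow$(ii), the idea is to apply the hypothesis to the least congruence on a one-generated free algebra. Let $\mathbf F=\mathbf F_\mathcal V(x)$; assuming $\mathbf F$ is non-trivial, apply (i) to $\omega_F\in\Con\mathbf F$. Since the classes of $\omega_F$ are singletons, some $\{t\}\subseteq F$ must be a subuniverse, i.e.\ $f(t,\dots,t)=t$ in $\mathbf F$ for every fundamental $f$. Because $\mathbf F$ is generated by $x$, the element $t$ is represented by some (at most) unary term $v(x)$; and an equality in the free algebra is an identity of $\mathcal V$, so $f\big(v(x),\dots,v(x)\big)\approx v(x)$ holds in $\mathcal V$.

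The only delicate point is the degenerate case in which $\mathbf F_\mathcal V(x)$ is trivial: then $|F|=1$ already forces $f(x,\dots,x)\approx x$ in $\mathcal V$ for every fundamental $f$, so $v(x):=x$ witnesses (ii). Apart from this minor bookkeeping the argument is routine, the one observation worth emphasising being that applying (i) to $\omega_F$ converts ``some class is a subuniverse'' into ``some singleton is a subuniverse'', which is precisely the content of the identity in (ii).
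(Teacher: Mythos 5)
The paper does not prove Proposition~\ref{prop1}; it quotes it from Cs\'ak\'any \cite{C75} (see Theorem~4.2.1 in \cite{CEL}), so there is no in-paper argument to compare against. Your proof is correct and is essentially the standard one for that cited result: the easy compatibility computation for (ii)$\Rightarrow$(i), and for (i)$\Rightarrow$(ii) the application of (i) to $\omega_F$ on $F_{\mathcal V}(x)$ so that the singleton subuniverse $\{v(x)\}$ yields the identity, with the degenerate case $|F_{\mathcal V}(x)|=1$ correctly disposed of by taking $v(x):=x$.
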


The best known examples of varieties satisfying the conditions of Proposition~\ref{prop1} are the variety of groups where one can define $v$ as the nullary operation, i.e.\ the neutral element of the group, or the variety of idempotent algebras (e.g.\ lattices or semilattices) where one can take $v(x):=x$. Recall that an {\em algebra} is called {\em idempotent} if it satisfies the identity $f(x,\ldots,x)\approx x$ for every fundamental operation $f$ and that a {\em variety} is called {\em idempotent} if every of its members has this property. Hence, if $\mathcal V$ is an idempotent variety then it satisfies (ii) of Proposition~\ref{prop1} for $v(x)=x$.

However, it may happen that there exists some non-trivial algebra $\mathbf A$ and some non-trivial congruence $\Theta$ on $\mathbf A$ such that the only class of $\Theta$ which is a subuniverse of $\mathbf A$ is a singleton.

This cannot happen if the variety is congruence uniform. Recall that an {\rm algebra} $\mathbf A$ is called {\em congruence uniform} if for each congruence $\Theta$ on $\mathbf A$ all classes of $\Theta$ have the same cardinality. A {\em variety} $\mathcal V$ is called {\em congruence uniform} if each of its members has this property. The best known examples of such varieties are the variety of groups, the variety of rings and the variety of Boolean algebras.

An immediate consequence of Proposition~\ref{prop1} is the following result.

\begin{corollary}\label{cor2}
For a congruence uniform variety $\mathcal V$ the following are equivalent:
\begin{enumerate}[{\rm(i)}]
\item Each congruence on a non-trivial member of $\mathcal V$ has some class being a subuniverse which is non-trivial if the congruence is non-trivial.
\item There exists some {\rm(}at most{\rm)} unary term $v(x)$ such that $\mathcal V$ satisfies the identity
\[
f\big(v(x),\ldots,v(x)\big)\approx v(x)
\]
for all fundamental operations $f$.
\end{enumerate}
\end{corollary}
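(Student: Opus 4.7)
The plan is to derive the corollary directly from Proposition~\ref{prop1}, using congruence uniformity only to upgrade ``subuniverse'' to ``non-trivial subuniverse'' in one direction.

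First I would dispatch the easy direction (i)$\Rightarrow$(ii). Condition (i) here is strictly stronger than condition (i) of Proposition~\ref{prop1}, since the corollary's (i) asserts the existence of a subuniverse class (with an extra non-triviality clause that we simply discard) for every congruence on every non-trivial member of $\mathcal V$. So condition (i) of Proposition~\ref{prop1} is satisfied, and Proposition~\ref{prop1} then yields the desired unary term $v(x)$, which is exactly condition (ii).

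For (ii)$\Rightarrow$(i) I would argue as follows. Assume the identity in (ii) holds, and let $\Theta$ be an arbitrary congruence on a non-trivial $\mathbf A\in\mathcal V$. By Proposition~\ref{prop1}, $\Theta$ has some class $C$ which is a subuniverse of $\mathbf A$. If $\Theta=\omega_A$, there is nothing further to check. If $\Theta\neq\omega_A$, then there exist $a,b\in A$ with $a\neq b$ and $(a,b)\in\Theta$, so the class $[a]_\Theta$ has cardinality at least~$2$. Here is where congruence uniformity enters: since $\mathcal V$ is congruence uniform, every class of $\Theta$ has the same cardinality as $[a]_\Theta$, hence $|C|\geq 2$, i.e.\ $C$ is a non-trivial subuniverse.

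There is essentially no obstacle; the content of the corollary is really just the observation that under congruence uniformity ``some class is a subuniverse'' automatically upgrades to ``some class is a non-trivial subuniverse'' as soon as the congruence is non-trivial, because no class of a non-trivial congruence can be a singleton. The only point worth flagging is that the proof makes no additional demand on the term $v(x)$; the same $v$ supplied by Proposition~\ref{prop1} works unchanged, so the equivalence in the corollary is a genuine corollary rather than a fresh characterization.
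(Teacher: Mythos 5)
Your proposal is correct and matches what the paper intends: the paper presents the corollary as an ``immediate consequence'' of Proposition~\ref{prop1} without written proof, and the evident argument is exactly yours --- (i) of the corollary trivially implies (i) of the proposition, while in the converse direction congruence uniformity forces every class of a non-trivial congruence to be a non-singleton, so the subuniverse class supplied by Proposition~\ref{prop1} is automatically non-trivial.
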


Corollary~\ref{cor2} applies to the variety of groups and the variety of rings.

Of course, the assumption of congruence uniformity is very strong and hence we try to find a weaker condition ensuring a condition similar to (i) of Corollary~\ref{cor2}.

Let $\mathcal V$ be a variety satisfying (i) of Corollary~\ref{cor2} and $F_\mathcal V(x,y)$ its free algebra with two free generators $x$ and $y$ and consider the congruence $\Theta(x,y)$ on $F_\mathcal V(x,y)$. Since $\Theta(x,y)$ is non-trivial it has a non-trivial class $C$. Then $C$ is of the form $[p_0(x,y)]\Theta(x,y)$ for some binary term $p_0(x,y)$. Hence there exist binary terms $p_1(x,y),\ldots,p_n(x,y)\in C\setminus\{p_0(x,y)\}$. Let $g$ denote the endomorphism of $F_\mathcal V(x,y)$ satisfying $g(x)=g(y)=x$. Then $g\big(t(x,y)\big)=t(x,x)$ for every $t(x,y)\in F_\mathcal V(x,y)$. Moreover, $(x,y)\in\ker g$ whence $\Theta(x,y)\subseteq\ker g$. Since $\big(p_0(x,y),p_i(x,y)\big)\in\Theta(x,y)$ for all $i=0,\ldots,n$ we have
\[
p_0(x,x)=g\big(p_0(x,y)\big)=g\big(p_i(x,y)\big)=p_i(x,x)
\]
for all $i=0,\ldots,n$. Although the following condition is neither a direct consequence of the previous nor a necessary one, we can consider it as a reasonable condition for such a variety $\mathcal V$. This condition says that
\begin{enumerate}
\item[(1)] there exists some positive integer $n$ and binary terms $p_0,\ldots,p_n$ such that
\[
p_0(x,y)=\cdots=p_n(x,y)\text{ if and only if }x=y.
\]
\end{enumerate}

The following lemma is almost evident.

\begin{lemma}
Let $\mathbf A=(A,F)$ be an algebra, $a,b\in A$ and $p$ a binary term and assume $\mathbf A$ to satisfy the identity
\[
f\big(p(x,x),\ldots,p(x,x)\big)\approx p(x,x)
\]
for every fundamental operation $f$. Then $[p(a,b)]\Theta(a,b)$ is a subuniverse of $\mathbf A$.
\end{lemma}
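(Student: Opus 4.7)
The plan is to show that the equivalence class $[p(a,b)]\Theta(a,b)$ is closed under every fundamental operation $f$ of $\mathbf A$. So I would fix an $n$-ary fundamental operation $f$ and elements $c_1,\ldots,c_n\in[p(a,b)]\Theta(a,b)$, and aim to verify $\bigl(f(c_1,\ldots,c_n),p(a,b)\bigr)\in\Theta(a,b)$.

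The argument proceeds in two steps. First, I would use the congruence properties of $\Theta(a,b)$ to reduce to a single key equivalence. Since $c_i\,\Theta(a,b)\,p(a,b)$ for every $i$, compatibility of $\Theta(a,b)$ with $f$ yields
\[
f(c_1,\ldots,c_n)\,\Theta(a,b)\,f\bigl(p(a,b),\ldots,p(a,b)\bigr).
\]
So the whole problem reduces to showing $f\bigl(p(a,b),\ldots,p(a,b)\bigr)\,\Theta(a,b)\,p(a,b)$.

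Second, I would bring in the assumed identity. From reflexivity, $(a,a)\in\Theta(a,b)$, and applying the binary term function $p$ to the pairs $(a,a)$ and $(a,b)$ gives $p(a,a)\,\Theta(a,b)\,p(a,b)$. Using compatibility with $f$ once more,
\[
f\bigl(p(a,b),\ldots,p(a,b)\bigr)\,\Theta(a,b)\,f\bigl(p(a,a),\ldots,p(a,a)\bigr).
\]
The hypothesized identity applied at $x=a$ collapses the right-hand side to $p(a,a)$, and then $p(a,a)\,\Theta(a,b)\,p(a,b)$ finishes the chain via transitivity.

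Chaining these equivalences yields $f(c_1,\ldots,c_n)\,\Theta(a,b)\,p(a,b)$, which is exactly what is needed. There is no real obstacle here; the only thing to be careful about is not to confuse the syntactic identity (which holds in $\mathbf A$ for \emph{all} inputs substituted for $x$) with the semantic congruence step: the identity is applied at the concrete element $a$ to get a genuine equation in $\mathbf A$, while the congruence supplies the replacement of $p(a,a)$ by $p(a,b)$ on either side of this equation.
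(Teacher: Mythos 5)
Your proof is correct and follows essentially the same route as the paper's: both first use compatibility of $\Theta(a,b)$ with $f$ to reduce to the tuple $\bigl(p(a,b),\ldots,p(a,b)\bigr)$, then replace $p(a,b)$ by $p(a,a)$ via $(a,b)\in\Theta(a,b)$, apply the identity at $x=a$, and return to $p(a,b)$. The only difference is cosmetic: you write the chain as a sequence of congruence relations, while the paper writes it as a chain of equal congruence classes.
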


\begin{proof}
If $f$ is an $m$-ary fundamental operation and $a_1,\ldots,a_m\in[p(a,b)]\Theta(a,b)$ then
\begin{align*}
f(a_1,\ldots,a_m)\in[f\big(p(a,b),\ldots,p(a,b)\big)]\Theta(a,b) & =[f\big(p(a,a),\ldots,p(a,a)\big)]\Theta(a,b)= \\
& =[p(a,a)]\Theta(a,b)=[p(a,b)]\Theta(a,b).
\end{align*}
\end{proof}

We now return to varieties with congruences having a class being a non-trivial subuniverse.

\begin{theorem}
Let $\mathcal V$ be a variety where every non-trivial congruence on a member of $\mathcal V$ has some class being a non-trivial subuniverse. Then the following hold:
\begin{enumerate}
\item[{\rm(2)}] There exists a binary term $p_0$ such that $\mathcal V$ satisfies the identity
\[
f\big(p_0(x,x),\ldots,p_0(x,x)\big)\approx p_0(x,x)
\]
for every fundamental operation $f$.
\item[{\rm(3)}] The class $[p_0(x,y)]\Theta(x,y)$ is a non-trivial subuniverse of $F_\mathcal V(x,y)$.
\end{enumerate}
\end{theorem}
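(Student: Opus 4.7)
The plan is to apply the hypothesis of the theorem to the two-generated free algebra and then to push the resulting ``$C$ is a subuniverse'' condition through the collapsing endomorphism $g$ introduced just before the theorem, turning it into a universally valid identity. All the ingredients are already on the table: the free algebra $F_\mathcal V(x,y)$, the principal congruence $\Theta(x,y)$, and the endomorphism $g$ with $g(x)=g(y)=x$ satisfying $\Theta(x,y)\subseteq\ker g$.

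First I would assume $\mathcal V$ is non-trivial, so that $x\neq y$ in $F_\mathcal V(x,y)$ and hence $\Theta(x,y)$ is a non-trivial congruence. By hypothesis there exists a class $C$ of $\Theta(x,y)$ which is a non-trivial subuniverse of $F_\mathcal V(x,y)$. Since every element of the two-generated free algebra is of the form $p(x,y)$ for some binary term $p$, I may pick a binary term $p_0$ with $p_0(x,y)\in C$, which forces $C=[p_0(x,y)]\Theta(x,y)$. This already settles~(3).

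The key step for~(2) is to observe, for each fundamental $m$-ary operation $f$, that closure of $C$ under $f$ yields
\[
\bigl(f(p_0(x,y),\ldots,p_0(x,y)),\,p_0(x,y)\bigr)\in\Theta(x,y).
\]
Applying $g$ collapses this pair to an equality, and since $g(t(x,y))=t(x,x)$ for every binary term $t$, the pair becomes $f(p_0(x,x),\ldots,p_0(x,x))=p_0(x,x)$ in $F_\mathcal V(x,y)$. Because this equation holds between two terms in the free algebra, it is an identity valid in all of $\mathcal V$, giving~(2).

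I do not foresee a genuine obstacle: once one recognizes that the pre-theorem discussion already supplies the right machinery, the argument reduces to two one-line observations, namely that $C$ is closed under $f$ and that $\Theta(x,y)\subseteq\ker g$. The only point deserving a moment's care is the tacit non-triviality of $\mathcal V$; otherwise $F_\mathcal V(x,y)$ collapses, no non-trivial subuniverse can exist, and~(3) fails vacuously.
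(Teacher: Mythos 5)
Your proposal is correct and follows essentially the same route as the paper: apply the hypothesis to $\Theta(x,y)$ on $F_\mathcal V(x,y)$, identify the non-trivial subuniverse class as $[p_0(x,y)]\Theta(x,y)$ for a binary term $p_0$, and push the closure condition through the endomorphism $g$ (using $\Theta(x,y)\subseteq\ker g$) to obtain the identity in (2). Your explicit remark on the non-triviality of $\mathcal V$ is a small point of extra care that the paper leaves tacit.
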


\begin{proof}
Consider the congruence $\Theta(x,y)$ on $F_\mathcal V(x,y)$. Because $\Theta(x,y)$ is non-trivial, there exists some $p_0\in F_\mathcal V(x,y)$ such that $[p_0]\Theta(x,y)$ is a non-trivial subuniverse of $F_\mathcal V(x,y)$. Since $F_\mathcal V(x,y)$ is the free algebra of $\mathcal V$ with two free generators $x$ and $y$, we have $p_0=p_0(x,y)$ for some binary term $p_0(x,y)$. Let $g$ denote the endomorphism of $F_\mathcal V(x,y)$ satisfying $g(x)=g(y)=x$. Then $g\big(t(x,y)\big)=t(x,x)$ for every $t(x,y)\in F_\mathcal V(x,y)$. Moreover, $(x,y)\in\ker g$ whence $\Theta(x,y)\subseteq\ker g$. Since $[p_0(x,y)]\Theta(x,y)$ is a subuniverse of $F_\mathcal V(x,y)$ we have $f\big(p_0(x,y),\ldots,p_0(x,y)\big)\mathrel{\Theta(x,y)}p_0(x,y)$ for all fundamental operations $f$ and hence
\[
f\big(p_0(x,x),\ldots,p_0(x,x)\big)=g\Big(f\big(p_0(x,y),\ldots,p_0(x,y)\big)\Big)=g\big(p_0(x,y)\big)=p_0(x,x)
\]
for all fundamental operations $f$.
\end{proof}

In the following theorem we show that the conditions (1) and (2) are sufficient for the fact that every non-trivial congruence on a member of $\mathcal V$ has some class being a non-trivial subuniverse.

\begin{theorem}\label{th1}
Let $\mathcal V$ be a variety such that there exists some positive integer $n$ and binary terms $p_0,\ldots,p_n$ satisfying the following two conditions:
\begin{enumerate}[{\rm(i)}]
\item $p_0(x,y)=\cdots=p_n(x,y)$ if and only if $x=y$.
\item $\mathcal V$ satisfies the identity
\[
f\big(p_0(x,x),\ldots,p_0(x,x)\big)\approx p_0(x,x)
\]
for every fundamental operation $f$.
\end{enumerate}
Then for every $\mathbf A=(A,F)\in\mathcal V$ and every non-trivial congruence $\Theta$ on $\mathbf A$ there exists some $a\in A$ such that $[p_0(a,a)]\Theta$ is a non-trivial subuniverse of $\mathbf A$.
\end{theorem}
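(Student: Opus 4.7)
The plan is to take an arbitrary pair $(a,b) \in \Theta$ with $a \neq b$ (which exists because $\Theta$ is non-trivial), show that the single class $C := [p_0(a,a)]\Theta$ is simultaneously a subuniverse of $\mathbf{A}$ and contains at least two elements. The two conditions (i) and (ii) split the work cleanly between these two tasks: (ii) handles closure under fundamental operations, and (i) handles non-triviality of the class.

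\textbf{Step 1 (the class is a subuniverse).} I would argue essentially as in the preceding lemma, but applied to $\Theta$ rather than to $\Theta(a,b)$. Fix any $a \in A$. For an $m$-ary fundamental operation $f$ and elements $a_1, \ldots, a_m \in C$, each $a_i$ is $\Theta$-related to $p_0(a,a)$, so $f(a_1, \ldots, a_m)$ is $\Theta$-related to $f\big(p_0(a,a), \ldots, p_0(a,a)\big)$. By identity (ii), the latter equals $p_0(a,a)$. Hence $f(a_1, \ldots, a_m) \in C$, and $C$ is a subuniverse.

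\textbf{Step 2 (the class is non-trivial).} Apply condition (i) in two directions. First, with $x = y = a$, the "if" direction of (i) gives $p_0(a,a) = p_1(a,a) = \cdots = p_n(a,a)$. Second, with $x = a$ and $y = b$ and $a \neq b$, the contrapositive of the "only if" direction of (i) yields indices $i, j$ with $p_i(a,b) \neq p_j(a,b)$. Now since $(a,b) \in \Theta$ and each $p_k$ is a term, we have $p_k(a,b) \mathrel{\Theta} p_k(a,a) = p_0(a,a)$ for every $k = 0, \ldots, n$, so every $p_k(a,b)$ lies in $C$. In particular $C$ contains the two distinct elements $p_i(a,b)$ and $p_j(a,b)$.

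Combining the two steps shows that $C = [p_0(a,a)]\Theta$ is a non-trivial subuniverse of $\mathbf{A}$, which is the desired conclusion. There is no real obstacle here; the only subtlety to keep in mind is that the preceding lemma was phrased for the principal congruence $\Theta(a,b)$, while we need the same closure argument for the possibly larger $\Theta$ containing $(a,b)$, so I would reprove closure in-line rather than appeal to the lemma verbatim.
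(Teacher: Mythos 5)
Your proof is correct under the standard reading of condition (i) -- namely that the biconditional holds for all elements $x,y$ of every member of $\mathcal V$, which is how the paper itself uses the ``if'' direction when it writes $p_0(a,a)=\cdots=p_n(a,a)$ for an element $a$ of an arbitrary $\mathbf A\in\mathcal V$ -- and it reaches the conclusion by a genuinely shorter route. Your subuniverse step coincides with the paper's. The difference lies in how non-triviality of $C=[p_0(a,a)]\Theta$ is obtained. You apply the contrapositive of the ``only if'' direction of (i) directly inside $\mathbf A$: since $a\neq b$, some $p_i(a,b)\neq p_j(a,b)$, and all the $p_k(a,b)$ lie in $C$. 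The paper instead assumes $C$ is a singleton, observes that then all $p_i(a,b)$ coincide, and derives the contradiction $a=b$ from a Mal'cev-type chain of ternary terms $t_1,\ldots,t_k$ whose existence is guaranteed by Theorem~3.4 of Davey--Miles--Schumann; that chain is precisely an equational witness for the implication ``all $p_i(a,b)$ equal $\Rightarrow a=b$''. Logically the two arguments establish the same implication in $\mathbf A$: you invoke (i) in $\mathbf A$ directly, while the paper re-derives it from identities. What the paper's detour buys is independence from the exact scope of (i): a Mal'cev condition is a set of identities and therefore automatically holds in every member of $\mathcal V$ once the quasi-identity is known to hold in $\mathcal V$ (indeed already in suitable free algebras), whereas your argument needs (i) to be available in the particular algebra $\mathbf A$ at hand. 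Under the intended interpretation these coincide, so your proof stands and is the more elementary and self-contained of the two.
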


\begin{proof}
Let $\mathbf A=(A,F)\in\mathcal V$ and $\Theta$ be a non-trivial congruence on $\mathbf A$. Then there exists some $a\in A$ such that $[a]\Theta$ is a non-trivial class of $\Theta$. Put $C:=[p_0(a,a)]\Theta$. If $c_1,\ldots,c_m\in C$ and $f$ is an $m$-ary fundamental operation then, according to (ii),
\[
f(c_1,\ldots,c_m)\in[f\big(p_0(a,a),\ldots,p_0(a,a)\big)]\Theta=[p_0(a,a)]\Theta=C
\]
showing $C$ to be a subuniverse of $\mathbf A$. Let $b\in[a]\Theta$ with $b\neq a$. Since $p_0(a,a)=\cdots=p_n(a,a)$ according to (i) we have $p_i(a,b)\in[p_i(a,a)]\Theta=[p_0(a,a)]\Theta=C$ for $i=0,\ldots,n$. According to Theorem~3.4 in \cite{DMS} condition (i) implies that there exists some positive integer $k$, ternary terms $t_1,\ldots,t_k$ and $u_1,\ldots,u_k,v_1,\ldots,v_k\in\{p_0,\ldots,p_n\}$ such that
\begin{align*}
                        x & \approx t_1\big(u_1(x,y),x,y\big), \\
t_i\big(v_i(x,y),x,y\big) & \approx t_{i+1}\big(u_{i+1}(x,y),x,y\big)\text{ for }i=1,\ldots,k-1, \\
t_k\big(v_k(x,y),x,y\big) & \approx y.
\end{align*}
Assume now $C$ to be a singleton. Then $p_i(a,b)=p_j(a,b)$ for all $i,j=0,\ldots,n$ and hence $u_i(a,b)=v_j(a,b)$ for all $i,j=1,\ldots,k$ and we obtain
\[
a=t_1\big(u_1(a,b),a,b\big)=t_1\big(v_1(a,b),a,b\big)=t_2\big(u_2(a,b),a,b\big)=\cdots=t_k\big(v_k(a,b),a,b\big)=b,
\]
a contradiction. Hence $C$ is a non-trivial subuniverse of $\mathbf A$.
\end{proof}
 
Recall that an {\rm algebra} $\mathbf A$ with an equationally definable constant $1$ is called {\em weakly regular with respect to $1$} if for all $\Theta,\Phi\in\Con\mathbf A$, $[1]\Theta=[1]\Phi$ implies $\Theta=\Phi$. A {\em variety} is called {\em weakly regular with respect to $1$} if any of its members has this property.

We recall the following characterization of varieties being weakly regular with respect to $1$ by B.~Cs\'ak\'any \cite{C70}, see also Theorem~6.4.3 in \cite{CEL}.

\begin{proposition}\label{prop2}
A variety with an equationally definable constant $1$ is weakly regular with respect to $1$ if and only if there exists some positive integer $n$ and binary terms $t_1,\ldots,t_n$ such that
\[
t_1(x,y)=\cdots=t_n(x,y)=1\text{ if and only if }x=y.
\]
\end{proposition}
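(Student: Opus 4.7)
My plan is to prove the two implications separately. For the sufficiency direction, I would assume binary terms $t_1,\ldots,t_n$ with the stated property; setting $x=y$ in the iff immediately yields the identities $t_i(x,x)\approx 1$ for $i=1,\ldots,n$. To derive weak regularity, I would take $\mathbf A\in\mathcal V$ and $\Theta,\Phi\in\Con\mathbf A$ with $[1]\Theta=[1]\Phi$, pick an arbitrary $(a,b)\in\Theta$, and use compatibility of $\Theta$ with each $t_i$ to obtain $t_i(a,b)\mathrel\Theta t_i(b,b)=1$; hence $t_i(a,b)\in[1]\Theta=[1]\Phi$, so $t_i(a,b)\mathrel\Phi 1$ for every $i$. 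Passing to the quotient $\mathbf A/\Phi\in\mathcal V$, the images $\bar a,\bar b$ satisfy $t_i(\bar a,\bar b)=\bar 1$ for all $i$, and the ``only if'' half of the hypothesis, applied inside $\mathbf A/\Phi$, forces $\bar a=\bar b$, i.e.\ $(a,b)\in\Phi$. Symmetry then yields $\Theta=\Phi$.

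For the converse, I would work in the free algebra $F:=F_\mathcal V(x,y)$. Let $\Theta:=\Theta_F(x,y)$ and let $\Phi$ be the congruence on $F$ generated by the (a priori infinite) set $\{(t,1):t\in[1]\Theta\}$. A double inclusion shows $[1]\Phi=[1]\Theta$: every generator of $\Phi$ already lies in $\Theta$, so $\Phi\subseteq\Theta$; conversely, every $t\in[1]\Theta$ is paired with $1$ in the generating set, so $t\in[1]\Phi$. Weak regularity of $\mathcal V$ then yields $\Phi=\Theta$, and in particular $(x,y)\in\Phi$. By compactness of the congruence lattice, there exist finitely many generators $(t_1(x,y),1),\ldots,(t_n(x,y),1)$ whose generated congruence already contains $(x,y)$; these $t_1,\ldots,t_n$ will be the desired terms. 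Each $t_i$ satisfies $t_i(x,x)\approx 1$ in $\mathcal V$ because, applying the endomorphism $g$ of $F$ with $g(x)=g(y)=x$ (as used earlier in the paper) to $t_i\mathrel\Theta 1$, we obtain $t_i(x,x)=g(t_i(x,y))=g(1)=1$ in $F$, which amounts to this identity. Conversely, if $t_i(a,b)=1$ holds for all $i$ in some $\mathbf A\in\mathcal V$, then the homomorphism $h\colon F\to\mathbf A$ with $h(x)=a$ and $h(y)=b$ sends every chosen generating pair into $\ker h$, so the congruence they generate lies inside $\ker h$; since it contains $(x,y)$, we conclude $a=h(x)=h(y)=b$.

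The main obstacle I anticipate is this compactness step: the congruence $\Phi$ is defined by a genuinely infinite set of generators, whereas the statement demands a \emph{finite} list $t_1,\ldots,t_n$. This is precisely where the algebraicity of $\Con F$ is essential, since it guarantees that any pair belonging to a congruence generated by an arbitrary set already belongs to one generated by finitely many of those pairs. Once such a finite list has been extracted, the remaining verifications in both directions reduce to routine applications of compatibility, the universal property of $F_\mathcal V(x,y)$, and the preservation of the equationally definable constant $1$ by homomorphisms.
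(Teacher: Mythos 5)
Your proof is correct; note that the paper itself offers no proof of this proposition, merely recalling it as Cs\'ak\'any's theorem with a pointer to Theorem~6.4.3 of \cite{CEL}, so there is no internal argument to compare against. What you give --- deriving the identities $t_i(x,x)\approx 1$ and passing to $\mathbf A/\Phi$ for sufficiency, and for necessity working in $F_\mathcal V(x,y)$, applying weak regularity to the congruence generated by $\{(t,1):t\in[1]\Theta(x,y)\}$ and then invoking compactness of the (algebraic) congruence lattice to extract finitely many terms --- is exactly the standard proof of this classical Mal'cev-type characterization, and all the steps, including the preservation of the equationally definable constant under the homomorphism $h$ and the endomorphism $g$, check out.
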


\begin{corollary}\label{cor3}
Let $\mathcal V$ be a weakly regular variety with respect to the equationally definable constant $1$ satisfying the identity $f(1,\ldots,1)\approx1$ for every fundamental operation $f$. Then every non-trivial congruence on a member of $\mathcal V$ has some class being a non-trivial subuniverse.
\end{corollary}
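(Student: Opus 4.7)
The plan is to derive this corollary as a direct application of Theorem~\ref{th1}, using Proposition~\ref{prop2} to supply the relevant binary terms. First I would invoke Proposition~\ref{prop2} on the weakly regular variety $\mathcal V$ to obtain a positive integer $n$ together with binary terms $t_1,\ldots,t_n$ satisfying
\[
t_1(x,y)=\cdots=t_n(x,y)=1\text{ if and only if }x=y.
\]

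Next I would set $p_0(x,y):=1$ (read as the binary term that constantly returns the equationally definable constant $1$) and $p_i(x,y):=t_i(x,y)$ for $i=1,\ldots,n$. Condition (i) of Theorem~\ref{th1} then reads ``$1=t_1(x,y)=\cdots=t_n(x,y)$ if and only if $x=y$,'' which is precisely the conclusion of Proposition~\ref{prop2}. For condition (ii) of Theorem~\ref{th1}, note that $p_0(x,x)=1$, so the required identity
\[
f\big(p_0(x,x),\ldots,p_0(x,x)\big)\approx p_0(x,x)
\]
becomes $f(1,\ldots,1)\approx 1$, which holds for every fundamental operation $f$ by hypothesis.

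With both hypotheses of Theorem~\ref{th1} verified, I would conclude that for every $\mathbf A=(A,F)\in\mathcal V$ and every non-trivial congruence $\Theta$ on $\mathbf A$ there exists some $a\in A$ such that $[p_0(a,a)]\Theta=[1]\Theta$ is a non-trivial subuniverse of $\mathbf A$, which is exactly the desired statement.

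There is no real obstacle here; the only thing one has to notice is that Proposition~\ref{prop2} produces $n$ binary terms whose simultaneous equality to $1$ characterizes the diagonal, and that prepending the constant binary term $1$ as an extra ``$p_0$'' turns this into the hypothesis (i) of Theorem~\ref{th1}, while the idempotence assumption on $1$ takes care of hypothesis (ii).
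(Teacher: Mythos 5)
Your proposal is correct and follows exactly the paper's own argument: apply Proposition~\ref{prop2} to obtain the terms $t_1,\ldots,t_n$, set $p_0:=1$ and $p_i:=t_i$, and check the two hypotheses of Theorem~\ref{th1}. You merely spell out the verification of conditions (i) and (ii) in slightly more detail than the paper does.
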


\begin{proof}
According to Proposition~\ref{prop2} there exists some positive integer $n$ and binary terms $t_1,\ldots,t_n$ such that
\[
t_1(x,y)=\cdots=t_n(x,y)=1\text{ if and only if }x=y.
\]
If we take $p_0:=1$ and $p_i:=t_i$ for $i=1,\ldots,n$ then the assumptions of Theorem~\ref{th1} are satisfied.
\end{proof}

\begin{example}
Recall that a {\em loop} is an algebra $(L,\cdot,/,\backslash,1)$ of type $(2,2,2,0)$ satisfying the following identities:
\[
(x/y)y\approx x, (xy)/y\approx x, x(x\backslash y)\approx y, x\backslash(xy) \approx y, x1\approx1x\approx x.
\]
Every non-trivial congruence on a loop has some class being a non-trivial subuniverse. This can be seen as follows. Put
\begin{align*}
       n & :=1, \\
t_1(x,y) & :=x/y.
\end{align*}
If $t_1(x,y)=1$ then $x/y=1$ and hence $x=(x/y)y=1y=y$. If, conversely, $x=y$ then $t_1(x,y)=x/x=(1x)/x=1$. Moreover, $1\cdot1=1$, $1/1=(1\cdot1)/1=1$ and $1\backslash1=1\backslash(1\cdot1)=1$. Now apply Corollary~\ref{cor3}.
\end{example}

\begin{example}
An {\em implication algebra} is a groupoid $(I,\cdot)$ satisfying the identities
\[
(xy)x\approx x, (xy)y\approx(yx)x\text{ and }x(yz)\approx y(xz).
\]
It is well-known that the identity $xx\approx yy$ holds in every implication algebra. Hence this element is an equationally definable constant denoted by $1$. Further, the binary relation $\leq$ on $I$ defined by $x\leq y$ if and only if $xy=1$ {\rm(}$x,y\in I${\rm)} is a partial order relation on $I$, see e.g.\ Section~2.2 in {\rm\cite{CEL}}. Every non-trivial congruence on an implication algebra has some class being a non-trivial subuniverse. This can be seen as follows. Put
\begin{align*}
       n & :=2, \\
t_1(x,y) & :=xy, \\
t_2(x,y) & :=yx.
\end{align*}
Then $t_1(x,y)=t_2(x,y)=1$ if and only if $x=y$. Moreover, $1\cdot1=1$. Now apply Corollary~\ref{cor3}.
\end{example}

\begin{remark}\label{rem1}
Let $\mathbf A=(A,F)$ be an idempotent algebra. Then every congruence class of $\mathbf A$ is a subuniverse of $\mathbf A$ and hence every non-trivial congruence on $\mathbf A$ has some class being a non-trivial subuniverse.
\end{remark}

In particular, we have: Every non-trivial congruence on a lattice {\rm(}semilattice{\rm)} has some class being a non-trivial sublattice {\rm(}subsemilattice{\rm)}.

If the term $p_0(x,y)$ of {\rm(ii)} in Theorem~\ref{th1} is a constant term {\rm(}as in Corollary~\ref{cor3}{\rm)} then, by Theorem~3.9 in {\rm\cite{DMS}}, the variety $\mathcal V$ is congruence modular and $n$-permutable {\rm(}for some $n\geq2${\rm)}. This is not true if $p_0(x,y)$ is a non-constant term as in the variety of semilattices where we can put $n:=1$, $p_0(x,y):=x$ and $p_1(x,y):=y$ in order to satisfy the assumptions of Theorem~\ref{th1}. Recall that a {\em class} $\mathcal K$ of algebras of the same type is called {\em $n$-permutable} if for all $\mathbf A\in\mathcal K$ and all $\Theta,\Phi\in\Con\mathbf A$ we have
\[
\Theta\circ\Phi\circ\Theta\circ\Phi\circ\cdots=\Phi\circ\Theta\circ\Phi\circ\Theta\circ\cdots
\]
where on both sides of this equality there are $n$ congruences.

The concept of a Rees congruence was introduced in \cite T, but firstly used for semigroups by D.~Rees in 1940 under a different name. A {\em Rees congruence} on an algebra $\mathbf A=(A,F)$ is a congruence of the form $B^2\cup\omega_A$ where $B$ is a subuniverse of $\mathbf A$. A {\em Rees algebra} is an algebra $\mathbf A=(A,F)$ such that $B^2\cup\omega_A\in\Con\mathbf A$ for every subuniverse $B$ of $\mathbf A$. A {\em Rees variety} is a variety consisting of Rees algebras only.

It was shown in \cite{CEL} that every subalgebra and every homomorphic image of a Rees algebra is a Rees algebra again. Moreover, the following is proved (Theorem~12.2.6).

\begin{proposition}
For an algebra $\mathbf A=(A,F)$ the following conditions are equivalent:
\begin{enumerate}[{\rm(i)}]
\item $\mathbf A$ is a Rees algebra.
\item Every subalgebra of $\mathbf A$ generated by two elements is a Rees algebra.
\item $\big(p(a),p(b)\big)\in\langle\{a,b\}\rangle^2\cup\omega_A$ for any $a,b\in A$ and every $p\in P_1(\mathbf A)$.
\end{enumerate}
\end{proposition}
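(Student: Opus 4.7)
The plan is to establish the cyclic implications (i)$\Rightarrow$(ii)$\Rightarrow$(iii)$\Rightarrow$(i).

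The direction (i)$\Rightarrow$(ii) is immediate from the fact, cited in the paragraph preceding the proposition, that every subalgebra of a Rees algebra is itself a Rees algebra; in particular, every subalgebra generated by two elements inherits the property, so nothing needs to be checked.

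For (iii)$\Rightarrow$(i), I would fix an arbitrary subuniverse $B$ of $\mathbf{A}$ and consider $\Theta := B^2 \cup \omega_A$. This set is plainly an equivalence relation, so the content is the substitution property: for every fundamental $n$-ary operation $f$ and pairs $(u_j, v_j) \in \Theta$ ($j=1,\ldots,n$), the pair $\bigl(f(u_1,\ldots,u_n), f(v_1,\ldots,v_n)\bigr)$ again lies in $\Theta$. A telescoping argument, changing one coordinate at a time and using transitivity of $\Theta$, reduces the verification to the single-coordinate case in which the tuples agree everywhere except at some index $i$, where $u_i, v_i \in B$. Setting $p(x) := f(u_1,\ldots,u_{i-1}, x, u_{i+1},\ldots,u_n)$ defines a unary polynomial of $\mathbf{A}$, and (iii) applied to $(u_i, v_i)$ and this $p$ delivers $\bigl(p(u_i), p(v_i)\bigr) \in \langle\{u_i, v_i\}\rangle^2 \cup \omega_A$. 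Because $u_i, v_i \in B$ and $B$ is a subuniverse, $\langle\{u_i, v_i\}\rangle \subseteq B$, so the pair lands in $B^2 \cup \omega_A = \Theta$.

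The direction (ii)$\Rightarrow$(iii) is where I expect the bulk of the work to live. Given $a, b \in A$ and $p \in P_1(\mathbf{A})$, the natural approach is to exploit Rees-ness of the 2-generated subalgebra $\mathbf{B} := \langle\{a,b\}\rangle$. Applying the already-shown implication (i)$\Rightarrow$(iii) inside $\mathbf{B}$ yields the polynomial-compatibility conclusion for every $q \in P_1(\mathbf{B})$, but this is formally weaker than what is wanted, because the given $p$ is a polynomial of the ambient algebra $\mathbf{A}$ and may involve constants lying outside $B$. The main obstacle is precisely this mismatch: one must show that the pair $(p(a), p(b))$, a priori anywhere in $A$, is forced into $\langle\{a,b\}\rangle^2 \cup \omega_A$ by the rigidity that Rees-ness of the relevant 2-generated subalgebras imposes. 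This is the step on which I would concentrate the detailed work, most likely by considering auxiliary 2-generated subalgebras such as $\langle\{a, p(b)\}\rangle$ or $\langle\{p(a), p(b)\}\rangle$ in tandem with the telescoping technique already used for (iii)$\Rightarrow$(i).
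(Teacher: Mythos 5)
The paper itself does not prove this proposition; it is quoted from \cite{CEL} (Theorem 12.2.6), so there is no internal proof to compare your attempt against, and I judge it on its own terms. The parts you actually carry out are correct: (i)$\Rightarrow$(ii) follows from the cited closure of Rees algebras under subalgebras, and your argument for (iii)$\Rightarrow$(i) --- telescoping one coordinate at a time, applying (iii) to the unary polynomial obtained by freezing the remaining coordinates, and using $\langle\{u_i,v_i\}\rangle\subseteq B$ together with transitivity of $B^2\cup\omega_A$ --- is sound; it is essentially Lemma~\ref{lem1} combined with the observation that $\langle\{a,b\}\rangle\subseteq B$ whenever $a,b\in B$.

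The genuine gap is (ii)$\Rightarrow$(iii), which is the only non-routine implication of the three and which you explicitly leave as a plan rather than a proof. You diagnose the obstacle correctly --- Rees-ness of $\mathbf B=\langle\{a,b\}\rangle$ only controls polynomials of $\mathbf B$, whereas the given $p\in P_1(\mathbf A)$ may involve constants lying outside $B$ --- but the remedies you float do not overcome it: passing to $\langle\{a,p(b)\}\rangle$ or $\langle\{p(a),p(b)\}\rangle$ meets exactly the same mismatch, since $p$ is still not a polynomial of those subalgebras, and the telescoping device from (iii)$\Rightarrow$(i) cannot simply be reused, because an induction on the complexity of the term defining $p$ stalls at the corresponding step: the induction hypothesis places the pairs $\big(p_j(a),p_j(b)\big)$ in $\langle\{a,b\}\rangle^2\cup\omega_A$, but the coordinates one must freeze when changing a single argument of the outermost operation may again lie outside $\langle\{a,b\}\rangle$, so one would need precisely the polynomial-compatibility statement being proved. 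As written, the cycle (i)$\Rightarrow$(ii)$\Rightarrow$(iii)$\Rightarrow$(i) is broken at its middle link, and the proposal does not establish the proposition.
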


Here $\langle\{a,b\}\rangle$ denotes the subalgebra of $\mathbf A$ generated by $\{a,b\}$.

Results on Rees algebras are collected in Chapter~12 of \cite{CEL} which contains also the following characterization of Rees varieties (Theorem 12.2.7).

\begin{proposition}
A variety is a Rees variety if and only if it is at most unary.
\end{proposition}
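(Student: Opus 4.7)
The plan is to prove both implications of the equivalence separately. For the easy direction, suppose every fundamental operation of $\mathcal V$ is nullary or unary. Given $\mathbf A=(A,F)\in\mathcal V$ and any subuniverse $B$ of $\mathbf A$, I would verify directly that $\Theta:=B^2\cup\omega_A$ is a congruence. It is clearly an equivalence relation; each nullary constant lies in $B$ because $B$ is closed under all fundamental operations, and for a unary $f$ with $(a,b)\in\Theta$ one has either $a=b$ (so $f(a)=f(b)$) or $a,b\in B$ (so $f(a),f(b)\in B$), giving $(f(a),f(b))\in\Theta$ in every case. Hence $\mathbf A$ is a Rees algebra.

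For the converse, assume $\mathcal V$ is a Rees variety and let $f$ be an $m$-ary fundamental operation with $m\ge 2$. My aim is to show that $f$ essentially depends on at most one argument, so that $\mathcal V$ is term-equivalent to an at most unary variety. Fix two distinct positions $i\ne j$ in $\{1,\ldots,m\}$ and work inside the free algebra $\mathbf F:=F_\mathcal V(x,y,w,z_1,\dots,z_{m-2})$; set $B:=\langle\{x,y\}\rangle$, so by assumption $B^2\cup\omega_\mathbf F\in\Con\mathbf F$. Form the $m$-tuple $\bar a$ having $x$ in position $i$, $w$ in position $j$, and the $z_k$'s filling the remaining positions, and let $\bar b$ be obtained from $\bar a$ by replacing $x$ with $y$. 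Each coordinate pair lies in $B^2\cup\omega_\mathbf F$, so compatibility of $f$ with this relation yields
\[
\bigl(f(\bar a),f(\bar b)\bigr)\in B^2\cup\omega_\mathbf F.
\]
If the two values coincide, then $\mathcal V$ satisfies the identity $f(\bar a)\approx f(\bar b)$ and $f$ does not essentially depend on position $i$. Otherwise $f(\bar a),f(\bar b)\in B=\langle\{x,y\}\rangle$ are representable by terms in $x,y$ alone, and the freeness of $\mathbf F$ then forces them to be independent of the generator $w$, so $f$ does not essentially depend on position $j$.

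The main obstacle is the converse direction, and specifically choosing the free algebra and the subuniverse $B$ so that the Rees property simultaneously tests essential dependence at two prescribed positions; after that, translating membership in $B$ into syntactic independence of the extra generators is immediate from freeness. With the dichotomy established for every pair $\{i,j\}$, it follows that $f$ cannot essentially depend on two distinct arguments, so $f$ is essentially at most unary. Applying this to every fundamental operation of $\mathcal V$ (the nullary ones requiring nothing) identifies $\mathcal V$ with an at most unary variety, completing the equivalence.
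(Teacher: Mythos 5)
Your argument is essentially correct, and it is in fact the classical proof of this result: the paper itself gives no proof here but merely cites Theorem~12.2.7 of \cite{CEL}, and the argument there (going back to Tichy and Chajda--Duda) is exactly your scheme for the hard direction -- work in a free algebra with enough generators, take $B=\langle\{x,y\}\rangle$, apply compatibility of the Rees congruence $B^2\cup\omega$ to the pair of tuples differing only in the $i$-th slot, and read off the dichotomy ``equal, hence independent of position $i$'' versus ``both values lie in $\langle\{x,y\}\rangle$, hence, being free of the generator $w$, independent of position $j$.'' One small imprecision to fix in the easy direction: the paper defines \emph{at most unary} to mean that every proper \emph{term} is essentially unary or nullary, so a fundamental operation may still be formally $m$-ary for $m\ge 2$; you should argue that if $f(x_1,\ldots,x_m)\approx g(x_i)$ for a unary term $g$ (or $f$ is essentially nullary), then for $(a_k,b_k)\in B^2\cup\omega_A$ one gets $f(\bar a)=g(a_i)$ and $f(\bar b)=g(b_i)$ with either $a_i=b_i$ or $g(a_i),g(b_i)\in B$ because a subuniverse is closed under all unary \emph{term} functions, not just fundamental operations. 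Conversely, in the hard direction you conclude only that every fundamental operation is essentially at most unary; a one-line induction on term complexity (a composition of essentially at most unary operations is essentially at most unary) is needed to pass to all terms, as the definition requires. Neither point affects the substance of the proof.
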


Recall that a {\em variety} $\mathcal V$ is called {\em at most unary} if every proper term of $\mathcal V$ is essentially unary or nullary.

The concept of a Rees algebra is rather strong. Hence we modify it as follows.

\begin{definition}
An {\em algebra} $\mathbf A=(A,F)$ is called a {\em quasi-Rees algebra} if for every non-trivial congruence $\Theta$ on $\mathbf A$ there exists some class $C$ of $\Theta$ being a non-trivial subuniverse of $\mathbf A$ such that $C^2\cup\omega_A\in\Con\mathbf A$.
\end{definition}

The following is an immediate consequence of the definition of a Rees algebra and a quasi-Rees algebra, respectively.

\begin{lemma}
Let $\mathbf A=(A,F)$ be an idempotent Rees algebra. Then $\mathbf A$ is a quasi-Rees algebra.
\end{lemma}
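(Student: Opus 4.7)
The proof should be essentially a direct chaining of the two hypotheses via Remark~\ref{rem1}. Let me sketch how it goes.

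Take an arbitrary non-trivial congruence $\Theta$ on $\mathbf A$. Because $\Theta \neq \omega_A$, some class of $\Theta$ contains two distinct elements; call such a class $C$. The plan is to verify two things about $C$: (a) $C$ is a non-trivial subuniverse of $\mathbf A$, and (b) $C^2 \cup \omega_A$ is a congruence on $\mathbf A$. Once both are established, $C$ witnesses the quasi-Rees property for $\Theta$.

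For (a), idempotence does all the work: if $f$ is an $m$-ary fundamental operation and $c_1,\ldots,c_m \in C$, then $c_1 \mathrel\Theta \cdots \mathrel\Theta c_m$, so $f(c_1,\ldots,c_m) \mathrel\Theta f(c_1,\ldots,c_1) = c_1$ by the identity $f(x,\ldots,x) \approx x$, whence $f(c_1,\ldots,c_m) \in C$. This is exactly the content of Remark~\ref{rem1}, so I would just cite it rather than reprove it. For (b), this is immediate from the assumption that $\mathbf A$ is a Rees algebra, once (a) tells us $C$ is a subuniverse: the definition of Rees algebra gives $B^2 \cup \omega_A \in \Con \mathbf A$ for every subuniverse $B$, in particular for $B = C$.

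There is no real obstacle here, since both hypotheses feed into the conclusion in essentially independent ways (idempotence to get the subuniverse, Rees to turn it into a congruence). The only small point worth stating explicitly is the choice of $C$ as a non-singleton class of $\Theta$, which uses non-triviality of $\Theta$ and ensures the resulting subuniverse is non-trivial as required by the definition of a quasi-Rees algebra.
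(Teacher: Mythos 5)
Your proof is correct and follows exactly the same route as the paper: pick a non-singleton class $C$ of the non-trivial congruence, invoke Remark~\ref{rem1} (idempotence) to see that $C$ is a non-trivial subuniverse, and then invoke the Rees property to conclude $C^2\cup\omega_A\in\Con\mathbf A$. No issues.
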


\begin{proof}
Let $\Theta$ be a non-trivial congruence on $\mathbf A$. Then there exists some non-trivial class $C$ of $\Theta$. According to Remark~\ref{rem1}, $C$ is a subuniverse of $\mathbf A$ and, since $\mathbf A$ is a Rees algebra, $C^2\cup\omega_A\in\Con\mathbf A$.
\end{proof}

We are going to find a large class of quasi-Rees algebras.

Recall from \cite{CL} that a {\em{\rm(}join-{\rm)}directoid} is a groupoid $(D,\sqcup)$ satisfying the identities
\[
x\sqcup x\approx x, (x\sqcup y)\sqcup x\approx x\sqcup y, y\sqcup(x\sqcup y)\approx x\sqcup y\text{ and }x\sqcup\big((x\sqcup y)\sqcup z\big)\approx(x\sqcup y)\sqcup z.
\]
Hence the class of (join-)directoids forms a variety. In particular, every (join-)semilattice is a (join-)directoid. It is well-known that the binary operation $\leq$ on $D$ defined by $x\leq y$ if $x\sqcup y=y$ is a partial order relation on $D$, called the {\em order induced} by $\mathbf D$, and that $x,y\leq x\sqcup y$.

We can prove the following result.

\begin{theorem}\label{th2}
Let $\mathbf D=(D,\sqcup)$ be a finite join-directoid. Then $\mathbf D$ is a quasi-Rees algebra.
\end{theorem}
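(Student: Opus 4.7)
My plan is to exploit the idempotence of directoids together with the finiteness hypothesis. Let $\Theta$ be a non-trivial congruence on $\mathbf D$. Since $x\sqcup x\approx x$ holds, Remark~\ref{rem1} already guarantees that every $\Theta$-class is a subuniverse of $\mathbf D$, so the task reduces to exhibiting one non-trivial class $C$ for which $C^2\cup\omega_D\in\Con\mathbf D$. The choice of $C$ is the heart of the argument: let $N:=\{a\in D:|[a]\Theta|>1\}$, which is non-empty because $\Theta$ is non-trivial, and use finiteness of $D$ to pick $m\in N$ that is $\leq$-maximal in $N$ with respect to the order induced on $\mathbf D$. Set $C:=[m]\Theta$; by construction $C$ is non-trivial.

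The key observation to extract is the following dichotomy: for every $u\in D$, either $m\sqcup u=m$ or the class $[m\sqcup u]\Theta$ is a singleton, and the same holds with $m\sqcup u$ replaced by $u\sqcup m$. Indeed, $m\leq m\sqcup u$ follows directly from the definition of the induced order, while $m\leq u\sqcup m$ comes from the directoid identity $y\sqcup(x\sqcup y)\approx x\sqcup y$ applied with $y=m$ and $x=u$. Since $m\sqcup u\geq m$, the maximality of $m$ in $N$ forces either $m\sqcup u\in N$ (in which case $m\sqcup u=m$) or $m\sqcup u\notin N$ (in which case $[m\sqcup u]\Theta=\{m\sqcup u\}$), and the argument for $u\sqcup m$ is identical.

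It then remains to verify that $C^2\cup\omega_D$ is compatible with $\sqcup$. By transitivity and symmetry it is enough to treat the case $(x,y)\in C^2$ with $u\in D$, from which the general case $(x,y),(u,v)\in C^2\cup\omega_D\Rightarrow(x\sqcup u,y\sqcup v)\in C^2\cup\omega_D$ follows by changing one coordinate at a time. From $x,y\equiv m\pmod\Theta$ one obtains $x\sqcup u\equiv m\sqcup u\equiv y\sqcup u\pmod\Theta$; by the dichotomy, either $m\sqcup u\in C$ and hence both $x\sqcup u,y\sqcup u\in C$, or $[m\sqcup u]\Theta$ is a singleton and hence $x\sqcup u=m\sqcup u=y\sqcup u$. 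In either situation $(x\sqcup u,y\sqcup u)\in C^2\cup\omega_D$, and the right-action case is entirely symmetric. I expect the main obstacle to be precisely this choice of $C$: the naive candidates (the class of a global $\leq$-maximum of $D$, or the class of a $\leq$-maximum of an arbitrary non-trivial class) can fail to be non-trivial or can fail the dichotomy; maximising inside $N$ is exactly the condition that makes both conclusions go through.
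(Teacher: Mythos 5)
Your proof is correct and follows essentially the same strategy as the paper's: use finiteness to select a maximal non-singleton class, deduce that everything above it is a singleton, and verify compatibility of $C^2\cup\omega_D$ by a case analysis on one coordinate at a time. The only (harmless) difference is that you maximise an element of $N$ with respect to the order induced on $\mathbf D$ itself, whereas the paper maximises the class with respect to the order induced on $\mathbf D/\Theta$; both yield the dichotomy that drives the argument.
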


\begin{proof}
Let $\Theta$ be a non-trivial congruence on $\mathbf D$. Denote by $\leq$ the order induced by $\mathbf D/\Theta$. According to Remark~\ref{rem1} there exists some class of $\Theta$ being a non-trivial subuniverse of $\mathbf D$. Let $C$ be a maximal (with respect to $\leq$) class with this property. Since $D$ is finite, such a class $C$ exists. Moreover, for each $E\in D/\Theta$ with $C<E$ we have $|E|=1$. (Clearly, $E$ is a subuniverse of $\mathbf D$ for every $E\in D/\Theta$.) Put $\Phi:=C^2\cup\omega_S$. Of course, $\Phi$ is an equivalence relation on $D$ and $\Phi\subseteq\Theta$. Suppose $(a,b),(c,d)\in\Phi$. \\
Case~1. $a=b$ and $c=d$. \\
Then clearly $(a\sqcup c,b\sqcup d)\in\Phi$. \\
Case~2. $a\neq b$ and $c\neq d$. \\
Then $a,b,c,d\in C$. But $C$ is a subuniverse of $\mathbf D$. Thus also $a\sqcup c,b\sqcup d\in C$ and hence $(a\sqcup c,b\sqcup d)\in\Phi$. \\
Case~3. $a\neq b$ and $c=d$. \\
Then $C=[a]\Theta=[b]\Theta\leq[a\sqcup c]\Theta=[b\sqcup d]\Theta$. By the assumption on $C$, either $a\sqcup c,b\sqcup d\in C$ or $a\sqcup c=b\sqcup d$.  Hence $(a\sqcup c,b\sqcup d)\in\Phi$. \\
Case~4. $a=b$ and $c\neq d$. \\
This case is symmetric to Case~3. \\
Altogether, $\Phi\in\Con\mathbf S$, i.e.\ $\mathbf D$ is a quasi-Rees algebra.
\end{proof}

\begin{corollary}
According to Remark~\ref{rem1} and Theorem~\ref{th2} the variety $\mathcal D$ of {\rm(}join-{\rm)}di\-rec\-toids and, in particular, the variety $\mathcal S$ of {\rm(}join-{\rm)}semilattices has the following property: For each $\mathbf D=(D,\sqcup)\in\mathcal V$ and each non-trivial congruence $\Theta$ on $\mathbf D$ there exists some class of $\Theta$ being a non-trivial subdirectoid of $\mathbf D$ and if $D$ is finite then there exists such a class $C$ of $\Theta$ such that $C^2\cup\omega_D$ is a congruence on $\mathbf D$. For each $\mathbf S=(S,\sqcup)\in\mathcal S$ and each non-trivial congruence $\Theta$ on $\mathbf S$ there exists some class of $\Theta$ being a non-trivial subsemilattice of $\mathbf S$ and if $S$ is finite then there exists such a class $C$ such that $C^2\cup\omega_S$ is a congruence on $\mathbf S$.
\end{corollary}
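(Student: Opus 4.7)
The plan is essentially to bundle together Remark~\ref{rem1} and Theorem~\ref{th2}, since the corollary is a restatement of what those two results already deliver when specialized to the varieties $\mathcal D$ and $\mathcal S$. First I would observe that every (join-)directoid $\mathbf D=(D,\sqcup)$ is idempotent, because the defining list of identities for a directoid explicitly includes $x\sqcup x\approx x$. So Remark~\ref{rem1} applies to $\mathbf D$ directly: every congruence class of $\mathbf D$ is automatically a subuniverse, hence a subdirectoid. Given a non-trivial congruence $\Theta$, non-triviality forces at least one class of $\Theta$ to contain two distinct elements; that class is the desired non-trivial subdirectoid.

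For the finite part of the directoid statement, I would simply invoke Theorem~\ref{th2}: a finite join-directoid is a quasi-Rees algebra, which by definition says exactly that for every non-trivial congruence $\Theta$ there exists a class $C$ which is a non-trivial subuniverse (i.e., a subdirectoid) and satisfies $C^2\cup\omega_D\in\Con\mathbf D$.

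For the semilattice version, I would note that the paper has already recorded that every (join-)semilattice is a (join-)directoid, so $\mathcal S\subseteq\mathcal D$ as varieties (of the same type, with $\sqcup$ as the sole fundamental operation). Consequently both assertions specialize without modification: for arbitrary $\mathbf S\in\mathcal S$, Remark~\ref{rem1} gives a non-trivial subuniverse class, which here is a subsemilattice because $\sqcup$ is the only operation; and for finite $\mathbf S$, Theorem~\ref{th2} yields the stronger Rees-type class $C$ with $C^2\cup\omega_S\in\Con\mathbf S$.

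There is no real obstacle to overcome here; the corollary is a bookkeeping statement assembling the idempotence observation with the quasi-Rees property already proved for finite directoids. The only minor point worth flagging explicitly in the write-up is the idempotence of directoids, which makes Remark~\ref{rem1} directly applicable, and the inclusion $\mathcal S\subseteq\mathcal D$, which allows both halves to transfer to semilattices verbatim.
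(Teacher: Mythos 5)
Your proposal is correct and follows exactly the route the paper intends: the corollary carries no separate proof, being just the combination of Remark~\ref{rem1} (directoids are idempotent, so every class is a subuniverse and a non-trivial congruence has a non-trivial class) with Theorem~\ref{th2} (finite join-directoids are quasi-Rees), specialized to $\mathcal D$ and to $\mathcal S\subseteq\mathcal D$. Nothing is missing.
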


Congruences having exactly one class that is not a singleton were treated in \cite{AK}.

\begin{definition}\label{def1}
An algebra $\mathbf A$ has the {\em one-block-property} if every atom of $\BCon\mathbf A$ has exactly one class which is not a singleton.
\end{definition}

Recall that a congruence $\Theta$ on $\mathbf A=(A,F)$ is called an {\em atom} of $\BCon\mathbf A$ if $\omega_A\prec\Theta$.

It was shown in \cite{AK} (see also Theorem~12.1.7 in \cite{CEL}) that a variety satisfying the one-block-property is congruence semimodular, but it need not be congruence modular.

The following result can be easily checked.

\begin{proposition}
Let $\mathbf A=(A,F)$ be a quasi-Rees algebra. Then it has the one-block-property.
\end{proposition}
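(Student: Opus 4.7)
The plan is to take an arbitrary atom $\Theta$ of $\BCon\mathbf A$ and show directly, using the quasi-Rees property, that $\Theta$ is forced to coincide with a Rees-style congruence $C^2\cup\omega_A$, so its non-singleton classes can be read off immediately.

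First I would let $\Theta$ be an atom of $\BCon\mathbf A$. Then $\Theta\neq\omega_A$, so by the defining property of a quasi-Rees algebra there exists a class $C$ of $\Theta$ that is a non-trivial subuniverse of $\mathbf A$ (in particular $|C|\geq 2$) such that $\Phi:=C^2\cup\omega_A$ is a congruence on $\mathbf A$.

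Next I would locate $\Phi$ within the congruence lattice. Because $C$ is a class of $\Theta$ we have $C^2\subseteq\Theta$, and of course $\omega_A\subseteq\Theta$, hence $\Phi\subseteq\Theta$. On the other hand $|C|\geq 2$ gives $\omega_A\subsetneq\Phi$. Since $\Theta$ is an atom of $\BCon\mathbf A$, i.e.\ $\omega_A\prec\Theta$, the only congruence strictly above $\omega_A$ and below $\Theta$ is $\Theta$ itself, so $\Phi=\Theta$.

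Finally I would read off the classes of $\Theta=C^2\cup\omega_A$. They are precisely $C$ together with the singletons $\{a\}$ for every $a\in A\setminus C$. Thus $\Theta$ has exactly one class that is not a singleton, namely $C$. This is the one-block-property in the sense of Definition~\ref{def1}, so $\mathbf A$ has the one-block-property. I expect no serious obstacle here; the entire content of the argument is the observation that $C^2\cup\omega_A$ is sandwiched between $\omega_A$ and the atom $\Theta$, which collapses it onto $\Theta$.
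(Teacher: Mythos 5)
Your proof is correct and follows essentially the same route as the paper: apply the quasi-Rees property to an atom $\Theta$ to obtain a class $C$ with $C^2\cup\omega_A\in\Con\mathbf A$, observe that this congruence is non-trivial and contained in $\Theta$, and conclude $\Theta=C^2\cup\omega_A$ by atomicity, which has $C$ as its unique non-singleton class. No issues.
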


\begin{proof}
Let $\Theta$ be an atom of $\BCon\mathbf A$. Since $\mathbf A$ is a quasi-Rees algebra, there exists some class $C$ of $\Theta$ being a non-trivial subuniverse of $\mathbf A$ such that $\Phi:=C^2\cup\omega_A\in\Con\mathbf A$. Clearly, $\Phi$ is a non-trivial congruence on $\mathbf A$ with $\Phi\subseteq\Theta$. Since $\Theta$ is an atom of $\BCon\mathbf A$ we have $\Theta=\Phi$. Hence $\mathbf A$ has the one-block-property.
\end{proof}

We can prove the following characterization of algebras having the one-block-property.

\begin{theorem}
Let $\mathbf A=(A,F)$ be an algebra. Then the following are equivalent:
\begin{enumerate}[{\rm(i)}]
\item $\mathbf A$ has the one-block-property.
\item If $a,b\in A$ and $(a,b)\in\Theta(x,y)$ for all $(x,y)\in\Theta(a,b)$ with $x\neq y$ then $x,y\in[a]\Theta(a,b)$ for all $(x,y)\in\Theta(a,b)$ with $x\neq y$.
\end{enumerate}
\end{theorem}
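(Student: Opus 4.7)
The plan is to observe that, once parsed carefully, condition (ii) is essentially a restatement of (i) in terms of principal congruences. Specifically, the hypothesis ``$(a,b)\in\Theta(x,y)$ for all $(x,y)\in\Theta(a,b)$ with $x\ne y$'' is exactly the condition that $\Theta(a,b)$ is an atom of $\BCon\mathbf A$ (or trivial, when $a=b$), and the conclusion ``$x,y\in[a]\Theta(a,b)$ for all such $(x,y)$'' is exactly the condition that $\Theta(a,b)$ has a unique non-singleton class (namely $[a]\Theta(a,b)$). So the proof reduces to verifying these two reformulations.

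For the direction (i)$\Rightarrow$(ii), I would take $a,b\in A$ satisfying the hypothesis of (ii). If $a=b$, then $\Theta(a,b)=\omega_A$ and the conclusion holds vacuously. Otherwise, I would first argue that $\Theta(a,b)$ is an atom: given any non-trivial $\Psi\subseteq\Theta(a,b)$, pick $(x,y)\in\Psi$ with $x\ne y$; by hypothesis $(a,b)\in\Theta(x,y)\subseteq\Psi$, so $\Theta(a,b)\subseteq\Psi$. Applying (i), $\Theta(a,b)$ has a unique non-singleton class, which must be $[a]\Theta(a,b)$ since $(a,b)\in\Theta(a,b)$ with $a\ne b$ guarantees $|[a]\Theta(a,b)|\geq 2$. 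Any $(x,y)\in\Theta(a,b)$ with $x\ne y$ must then satisfy $x,y\in[a]\Theta(a,b)$.

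For the direction (ii)$\Rightarrow$(i), I would start with an atom $\Theta$ of $\BCon\mathbf A$ and pick $(a,b)\in\Theta$ with $a\ne b$. Since $\Theta(a,b)\subseteq\Theta$ is non-trivial and $\Theta$ is an atom, $\Theta(a,b)=\Theta$. For any $(x,y)\in\Theta$ with $x\ne y$, the same atom argument shows $\Theta(x,y)=\Theta$, so $(a,b)\in\Theta(x,y)$. Hence the hypothesis of (ii) applies, and I conclude that every pair $(x,y)\in\Theta$ with $x\ne y$ lies in $[a]\Theta(a,b)=[a]\Theta$. This means $[a]\Theta$ is the only non-singleton class of $\Theta$, verifying the one-block-property.

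I do not expect any genuine obstacle here; the whole theorem is a translation between the lattice-theoretic formulation of ``atom with a unique non-singleton class'' and a direct formulation in terms of pairs $(a,b)$ and principal congruences. The only subtle point is to handle the degenerate case $a=b$ in (ii) (where the universal quantifier is over an empty set) and to verify, in the (i)$\Rightarrow$(ii) direction, that the unique non-singleton class of $\Theta(a,b)$ really is $[a]\Theta(a,b)$ rather than some other class, which is immediate from $a\ne b$.
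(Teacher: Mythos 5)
Your proof is correct and follows essentially the same route as the paper's: both directions hinge on the observation that the hypothesis of (ii) says precisely that $\Theta(a,b)$ is an atom of $\BCon\mathbf A$ (when $a\neq b$), and that every atom is a principal congruence $\Theta(a,b)$ whose class $[a]\Theta(a,b)$ is non-singleton. The only cosmetic difference is that the paper handles the degenerate case by fixing a witness pair $(c,d)\in\Theta(a,b)$ with $c\neq d$ and deducing $a\neq b$, whereas you split on $a=b$ versus $a\neq b$; both are fine.
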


\begin{proof}
$\text{}$ \\
(i) $\Rightarrow$ (ii): \\
Assume $a,b\in A$, $(a,b)\in\Theta(x,y)$ for all $(x,y)\in\Theta(a,b)$ with $x\neq y,$ and $(c,d)\in\Theta(a,b)$ with $c\neq d$. Then $a\neq b$. Let $\Phi$ be a non-trivial congruence on $\mathbf A$ with $\Phi\subseteq\Theta(a,b)$. Then there exists some $(e,f)\in\Phi$ with $e\neq f$. We have $\Theta(e,f)\subseteq\Phi\subseteq\Theta(a,b)$. Because of $(e,f)\in\Theta(a,b)$ and $e\neq f$ we conclude $(a,b)\in\Theta(e,f)$ and hence $\Theta(a,b)\subseteq\Theta(e,f)$. Together we obtain $\Theta(e,f)=\Theta(a,b)$ and therefore $\Phi=\Theta(a,b)$ showing that $\Theta(a,b)$ is an atom of $\BCon\mathbf A$. According to (i), $[a]\Theta(a,b)$ is the unique class of $\Theta(a,b)$ which is not a singleton and hence $c,d\in[a]\Theta(a,b)$. \\
(ii) $\Rightarrow$ (i): \\
Assume $\Theta$ to be an atom of $\BCon\mathbf A$. Then there exist $a,b\in A$ with $a\neq b$ and $\Theta(a,b)=\Theta$. Suppose $(c,d)\in\Theta(a,b)$ and $c\neq d$. Then $\theta(c,d)$ is a non-trivial congruence on $\mathbf A$ with $\Theta(c,d)\subseteq\Theta(a,b)$. Since $\Theta(a,b)$ is an atom of $\BCon\mathbf A$ we have $\Theta(c,d)=\Theta(a,b)$ and hence $(a,b)\in\Theta(c,d)$. Because of (ii) we have $x,y\in[a]\Theta(a,b)$ for all $(x,y)\in\Theta(a,b)$ with $x\neq y$. This shows that $[a]\Theta(a,b)$ is the unique class of $\Theta$ which is not a singleton.
\end{proof}

In the following let $P_1(\mathbf A)$ denote the set of unary polynomial functions on the algebra $\mathbf A$.

In the proof of the next lemma we use the well-known fact that an equivalence relation $\Theta$ on the universe of an algebra $\mathbf A$ is a congruence on $\mathbf A$ if and only if $(x,y)\in\Theta$ implies $\big(p(x),p(y)\big)\in\Theta$ for all $p\in P_1(\mathbf A)$.

The proof of the following lemma is straightforward.

\begin{lemma}\label{lem1}
Let $\mathbf A=(A,F)$ be an algebra and $B\subseteq A$. Then the following are equivalent:
\begin{enumerate}[{\rm(i)}]
\item $B^2\cup\omega_A\in\Con\mathbf A$.
\item If $(a,b)\in B^2$ and $p\in P_1(\mathbf A)$ then $\big(p(a),p(b)\big)\in B^2\cup\omega_A$.
\end{enumerate}
\end{lemma}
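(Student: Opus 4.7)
The plan is to reduce the statement to the standard criterion recalled immediately before the lemma, namely that an equivalence relation on $A$ is a congruence on $\mathbf A$ if and only if it is preserved by every $p\in P_1(\mathbf A)$. The first step is therefore to check that $B^2\cup\omega_A$ is in any case an equivalence relation on $A$, independently of (i). Reflexivity holds because $\omega_A\subseteq B^2\cup\omega_A$; symmetry is clear since both $B^2$ and $\omega_A$ are symmetric; transitivity follows from a short case split on a composable pair $(a,b),(b,c)\in B^2\cup\omega_A$: if both lie in $B^2$ then $a,c\in B$, and if one of them lies in $\omega_A$ then $a=b$ or $b=c$, so the conclusion collapses to the other pair.

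With this in hand, by the quoted criterion, (i) is equivalent to the statement that for every $(x,y)\in B^2\cup\omega_A$ and every $p\in P_1(\mathbf A)$ one has $\big(p(x),p(y)\big)\in B^2\cup\omega_A$. To pass from this to (ii), I would observe that pairs from $\omega_A$ contribute nothing: if $(x,y)\in\omega_A$ then $x=y$, hence $p(x)=p(y)$, so $\big(p(x),p(y)\big)\in\omega_A\subseteq B^2\cup\omega_A$ automatically. Thus the preservation condition is equivalent to its restriction to pairs $(a,b)\in B^2$, which is exactly (ii). This gives both implications at once.

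There is no substantial obstacle; the only bookkeeping point is to verify that $B^2\cup\omega_A$ is always an equivalence relation regardless of (i), so that the preservation characterisation of congruences is actually applicable. Once that is recorded, the equivalence is immediate from the triviality of the $\omega_A$-part of the preservation condition.
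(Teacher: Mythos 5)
Your proof is correct and follows exactly the route the paper intends: the paper omits the proof as ``straightforward'' but explicitly points to the preservation criterion via unary polynomials in the preceding paragraph, which is precisely what you apply after checking that $B^2\cup\omega_A$ is an equivalence relation and that the $\omega_A$-pairs are trivially preserved. Nothing is missing.
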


Next we describe varieties $\mathcal V$ such that for any $\mathbf A=(A,F)\in\mathcal V$ and any $\Theta\in\Con\mathbf A$ there exists some class $B$ of $\Theta$ being a subuniverse of $\mathbf A$ satisfying $B^2\cup\omega_A\in\Con\mathbf A$.

A {\em variety with an absorbing element $1$} is a variety with a unique nullary operation $1$ satisfying the identities
\[
f(x_1,\ldots,x_{i-1},1,x_{i+1},\ldots,x_n)\approx1
\]
for all $n$-ary fundamental operations $f$ and for all $i=1,\ldots,n$. It can be easily shown by induction of term complexity that then also the identities
\[
t(x_1,\ldots,x_{i-1},1,x_{i+1},\ldots,x_n)\approx1
\]
hold for all $n$-ary terms $t$ and for all $i=1,\ldots,n$. It is easy to see that for an algebra $\mathbf A$ the following are equivalent:
\begin{enumerate}[(i)]
\item $\mathbf A$ has an absorbing element $1$.
\item $1$ is the unique nullary operation of $\mathbf A$, and each $p\in P_1(\mathbf A)$ is either a constant function or $p(1)=1$.
\end{enumerate}

\begin{theorem}\label{th3}
Let $\mathcal V$ be a variety with an absorbing element $1$. Then for every $\mathbf A=(A,F)\in\mathcal V$ and every $\Theta\in\Con\mathbf A$, $[1]\Theta$ is the unique class of $\Theta$ being a subuniverse of $\mathbf A$ and, moreover, $([1]\Theta)^2\cup\omega_A\in\Con\mathbf A$.
\end{theorem}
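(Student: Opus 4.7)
The plan is to verify each of the three assertions in turn, exploiting the characterization of an absorbing element $1$ given immediately before the theorem (namely: $1$ is the unique nullary operation, and every $p\in P_1(\mathbf A)$ is either constant or satisfies $p(1)=1$), together with Lemma \ref{lem1}.

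First I would show that $[1]\Theta$ is a subuniverse. Take $c_1,\ldots,c_n\in[1]\Theta$ and an $n$-ary fundamental operation $f$. Because $1$ is absorbing, $f(1,\ldots,1)=1$, so from $(c_i,1)\in\Theta$ for all $i$ one gets $(f(c_1,\ldots,c_n),1)=(f(c_1,\ldots,c_n),f(1,\ldots,1))\in\Theta$, whence $f(c_1,\ldots,c_n)\in[1]\Theta$. Uniqueness is then immediate: since the nullary operation $1$ belongs to every subuniverse, any class of $\Theta$ that is a subuniverse must contain $1$, and so must coincide with $[1]\Theta$.

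For the third assertion I would apply Lemma \ref{lem1} with $B:=[1]\Theta$. Given $(a,b)\in([1]\Theta)^2$ and $p\in P_1(\mathbf A)$, I distinguish the two cases afforded by the characterization of absorbing elements. If $p$ is constant, then $p(a)=p(b)$, so $(p(a),p(b))\in\omega_A$. Otherwise $p(1)=1$, and from $a,b\in[1]\Theta$ one infers $p(a)\,\Theta\,p(1)=1$ and $p(b)\,\Theta\,p(1)=1$, so $p(a),p(b)\in[1]\Theta$ and $(p(a),p(b))\in([1]\Theta)^2$. Either way the condition (ii) of Lemma \ref{lem1} holds, and therefore $([1]\Theta)^2\cup\omega_A\in\Con\mathbf A$.

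There is really no deep obstacle here; the only point requiring any care is remembering that a subuniverse must be closed under nullary operations, which forces $1$ to lie in every subuniverse and delivers uniqueness essentially for free. The case split on whether a unary polynomial is constant or not is the crucial structural fact that makes Lemma \ref{lem1} directly applicable.
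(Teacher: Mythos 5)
Your proof is correct and follows essentially the same route as the paper's: closure of $[1]\Theta$ under the fundamental operations via $f(1,\ldots,1)=1$, uniqueness from the fact that the nullary operation $1$ must lie in every subuniverse, and the congruence property via Lemma~\ref{lem1} with the same case split on whether $p\in P_1(\mathbf A)$ is constant or fixes $1$. Nothing to add.
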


\begin{proof}
Let $\mathbf A=(A,F)\in\mathcal V$ and $\Theta\in\Con\mathbf A$ and put $B:=[1]\Theta$. Since $\mathbf A$ satisfies the identity $f(1,\ldots,1)\approx1$ for every fundamental operation $f$, $B$ is a subuniverse of $\mathbf A$. Because $1$ is a nullary fundamental operation, no other class of $\Theta$ is a subuniverse of $\mathbf A$. Now let $(a,b)\in B^2$ and $p\in P_1(\mathbf A)$. Then $p$ either is a constant function or $p(1)=1$. If $p$ is a constant function then $\big(p(a),p(b)\big)\in\omega_A\subseteq B^2\cup \omega_A$. Otherwise, $\big(p(a),p(b)\big)\in\big([p(1)]\Theta\big)^2=([1]\Theta)^2=B^2\subseteq B^2\cup\omega_A$. According to Lemma~\ref{lem1}, $B^2\cup\omega_A\in\Con\mathbf A$.
\end{proof}

\begin{example}
Examples of varieties with an absorbing element $1$ are join-semilattices with top element $1$, join-directoids with a top element $1$ and semigroups with an absorbing element, usually denoted by $0$.
\end{example}

\begin{corollary}
Let $\mathcal V$ be a variety with an absorbing element $1$. If $\mathbf A\in\mathcal V$ and $|[1]\Theta|>1$ for each non-trivial congruence $\Theta$ on $\mathbf A$ then $\mathbf A$ is a quasi-Rees algebra.
\end{corollary}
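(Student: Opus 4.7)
The plan is to check the definition of quasi-Rees algebra directly, using Theorem~\ref{th3} as the key input. Let $\mathbf A=(A,F)\in\mathcal V$ satisfy the hypothesis, and let $\Theta$ be an arbitrary non-trivial congruence on $\mathbf A$. I need to produce a class $C$ of $\Theta$ which is a non-trivial subuniverse of $\mathbf A$ and for which $C^2\cup\omega_A\in\Con\mathbf A$.

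The natural candidate is $C:=[1]\Theta$. First, Theorem~\ref{th3} applied to $\mathbf A$ and $\Theta$ yields two facts: $[1]\Theta$ is a subuniverse of $\mathbf A$, and $([1]\Theta)^2\cup\omega_A\in\Con\mathbf A$. This handles the subuniverse property and the Rees-type congruence condition in one step. Second, the assumption $|[1]\Theta|>1$ (valid because $\Theta$ is non-trivial) ensures that this subuniverse is non-trivial, which is precisely the extra requirement in the definition of a quasi-Rees algebra compared to the conclusion of Theorem~\ref{th3}.

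Since $\Theta$ was an arbitrary non-trivial congruence on $\mathbf A$, this establishes that $\mathbf A$ is a quasi-Rees algebra. There is no real obstacle here: the corollary is essentially a rewording of Theorem~\ref{th3} under the additional hypothesis that $[1]\Theta$ is never a singleton on non-trivial congruences. The only small point to verify is that non-triviality of the subuniverse $[1]\Theta$ in the sense of the definition of a quasi-Rees algebra matches the hypothesis $|[1]\Theta|>1$, which is immediate.
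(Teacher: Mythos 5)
Your proof is correct and is exactly the argument the paper intends: the corollary is stated without proof as an immediate consequence of Theorem~\ref{th3}, and your application of that theorem to get the subuniverse and Rees-congruence properties of $[1]\Theta$, combined with the hypothesis $|[1]\Theta|>1$ for non-triviality, is precisely the intended reasoning.
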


\begin{example}
The join-semilattice $\mathbf A=(A,\vee,1)$ with top element $1$ depicted in Figure~1 has the absorbing element $1$.

\vspace*{-3mm}

\begin{center}
\setlength{\unitlength}{7mm}
\begin{picture}(4,4)
\put(2,1){\circle*{.3}}
\put(1,2){\circle*{.3}}
\put(3,2){\circle*{.3}}
\put(2,3){\circle*{.3}}
\put(2,1){\line(-1,1)1}
\put(2,1){\line(1,1)1}
\put(2,3){\line(-1,-1)1}
\put(2,3){\line(1,-1)1}
\put(1.85,.25){$0$}
\put(.3,1.85){$a$}
\put(3.4,1.85){$b$}
\put(1.9,3.4){$1$}
\put(1.25,-.75){{\rm Fig.~1}}
\end{picture}
\end{center}

\vspace*{3mm}

The congruence lattice of $\mathbf A$ is visualized in Figure~2:

\vspace*{-3mm}

\begin{center}
\setlength{\unitlength}{7mm}
\begin{picture}(6,6)
\put(3,1){\circle*{.3}}
\put(2,2){\circle*{.3}}
\put(4,2){\circle*{.3}}
\put(1,3){\circle*{.3}}
\put(3,3){\circle*{.3}}
\put(5,3){\circle*{.3}}
\put(3,5){\circle*{.3}}
\put(3,1){\line(-1,1)2}
\put(3,1){\line(1,1)2}
\put(3,3){\line(-1,-1)1}
\put(3,3){\line(0,1)2}
\put(3,3){\line(1,-1)1}
\put(3,5){\line(-1,-1)2}
\put(3,5){\line(1,-1)2}
\put(2.65,.25){$\omega_A$}
\put(1.1,1.85){$\Theta_1$}
\put(.1,2.85){$\Theta_3$}
\put(5.35,2.85){$\Theta_5$}
\put(3.35,2.85){$\Theta_4$}
\put(4.35,1.85){$\Theta_2$}
\put(2.65,5.4){$A^2$}
\put(2.25,-.75){{\rm Fig.~2}}
\end{picture}
\end{center}

\vspace*{3mm}

with
\begin{align*}
\Theta_1 & :=\{0\}^2\cup\{a\}^2\cup\{b,1\}^2, \\
\Theta_2 & :=\{0\}^2\cup\{b\}^2\cup\{a,1\}^2, \\
\Theta_3 & :=\{0,a\}^2\cup\{b,1\}^2, \\
\Theta_4 & :=\{0\}^2\cup\{a,b,1\}^2, \\
\Theta_5 & :=\{0,b\}^2\cup\{a,1\}^2.
\end{align*}
It is easy to see that $\mathbf A$ is a quasi-Rees algebra having the one-block-property. In accordance with the remark after Definition~\ref{def1}, $\BCon\mathbf A$ is semimodular, but not modular as can easily be checked.
\end{example}

Next we describe Rees congruences on a quotient algebra $\mathbf A/\Theta$ of some algebra $\mathbf A=(A,F)$ with respect to a congruence $\Theta$ on $\mathbf A$. Keep in mind that then a subset of $A/\Theta$ is in fact a set of congruence classes, i.e.\ of subsets of $A$.

\begin{theorem}\label{th4}
Let $\mathbf A$ be an algebra, $\Theta\in\Con\mathbf A$ and $B\subseteq A/\Theta$ and define $C:=\bigcup\limits_{X\in B}X$. Then the following hold:
\begin{enumerate}[{\rm(i)}]
\item The relation $B^2\cup\omega_{A/\Theta}$ on $A/\Theta$ is a congruence on $\mathbf A/\Theta$ if and only if $C^2\cup\Theta\in\Con\mathbf A$. In this case we have
\[
B^2\cup\omega_{A/\Theta}=(C^2\cup\Theta)/\Theta.
\]
\item The subset $B$ of $A/\Theta$ is a subuniverse of $\mathbf A/\Theta$ if and only if $C$ is a subuniverse of $\mathbf A$.
\end{enumerate}
\end{theorem}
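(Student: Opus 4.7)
The plan is to exploit the standard correspondence between congruences of $\mathbf A/\Theta$ and congruences of $\mathbf A$ containing $\Theta$, together with the observation that $C=\bigcup_{X\in B}X$ is by construction a union of $\Theta$-classes and hence $\Theta$-saturated. Because distinct $\Theta$-classes are pairwise disjoint, $B$ equals the set of all $\Theta$-classes contained in $C$; this identification is used implicitly throughout and is the one small subtlety to keep in mind.

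For part (i), I would first check that $C^2\cup\Theta$ is an equivalence relation on $A$: reflexivity and symmetry are immediate, and transitivity uses the $\Theta$-saturation of $C$ to ensure that a $\Theta$-step cannot leave $C$. Next I would compute $(C^2\cup\Theta)/\Theta$ by splitting the union: the image of $\Theta$ under the natural projection $\pi\colon A\to A/\Theta$ is $\omega_{A/\Theta}$, while the image of $C^2$ is exactly $B^2$, since a pair $(a,b)\in C^2$ projects to a pair of classes in $B$ and any pair of classes in $B$ lifts to a pair of elements of $C$. This yields the displayed equality $B^2\cup\omega_{A/\Theta}=(C^2\cup\Theta)/\Theta$. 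The correspondence theorem for congruences then gives the claimed "if and only if": $C^2\cup\Theta$ is a congruence on $\mathbf A$ precisely when its pushforward $B^2\cup\omega_{A/\Theta}$ is a congruence on $\mathbf A/\Theta$.

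For part (ii), the main tool is the defining rule $f^{\mathbf A/\Theta}([a_1]\Theta,\ldots,[a_n]\Theta)=[f^{\mathbf A}(a_1,\ldots,a_n)]\Theta$ for every $n$-ary fundamental operation $f$. In the forward direction, given $X_1,\ldots,X_n\in B$, I pick $a_i\in X_i\subseteq C$ and invoke closure of $C$ to obtain $f^{\mathbf A}(a_1,\ldots,a_n)\in C$; its $\Theta$-class is therefore contained in $C$, hence lies in $B$. For the converse, given $a_1,\ldots,a_n\in C$ I set $X_i:=[a_i]\Theta\in B$ and read off $[f^{\mathbf A}(a_1,\ldots,a_n)]\Theta=f^{\mathbf A/\Theta}(X_1,\ldots,X_n)\in B$, which forces $f^{\mathbf A}(a_1,\ldots,a_n)\in C$. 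I do not anticipate any real obstacle; the one trap is conflating $B$ with just the initially listed classes instead of with all $\Theta$-classes contained in $C$, and this is resolved immediately by the disjointness of distinct $\Theta$-classes.
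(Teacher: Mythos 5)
Your proposal is correct, and part (ii) coincides with the paper's argument, but your treatment of part (i) takes a genuinely different route. The paper reduces both congruence conditions to closure under unary polynomial functions: it uses its Lemma~\ref{lem1} together with the explicit description $P_1(\mathbf A/\Theta)=\{[x]\Theta\mapsto[p(x)]\Theta\mid p\in P_1(\mathbf A)\}$, and then chains a sequence of equivalences; the displayed equality $B^2\cup\omega_{A/\Theta}=(C^2\cup\Theta)/\Theta$ is verified afterwards by a separate elementwise computation. You instead compute that pushforward first and then invoke the correspondence theorem between $\Con(\mathbf A/\Theta)$ and the interval $[\Theta,A^2]$ of $\BCon\mathbf A$. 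Your route is shorter and leans on standard machinery, whereas the paper's is more self-contained and reuses a lemma it needs elsewhere. The one step you should make fully explicit is the backward direction of the correspondence argument: from $B^2\cup\omega_{A/\Theta}\in\Con(\mathbf A/\Theta)$ you should pass to its full preimage under the natural map, which is a congruence containing $\Theta$, and then use the $\Theta$-saturation of $C$ (every pair $(a,b)$ whose classes lie in $B$ already lies in $C^2$) to identify that preimage with $C^2\cup\Theta$; merely knowing that the pushforward of $C^2\cup\Theta$ is $B^2\cup\omega_{A/\Theta}$ is not by itself enough, since pushing forward a non-congruence can in general yield a congruence. Since you flagged the saturation explicitly, this is a matter of writing it out rather than a gap.
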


\begin{proof}
\
\begin{enumerate}[(i)]
\item Let $p\colon A\rightarrow A$. Then $p\in P_1(\mathbf A)$ if and only if $p$ is either a constant function or there exists some positive integer $m$, some $m$-ary term $t$, some $i\in\{1,\ldots,m\}$ and some $a_1,\ldots,a_{i-1},a_{i+1},\ldots,a_m\in A$ such that
\[
t(a_1,\ldots,a_{i-1},x,a_{i+1},\ldots,a_m)=p(x)
\]
for all $x\in A$. Now let $q:A/\Theta\rightarrow A/\Theta$. Then $q\in P_1(\mathbf A/\Theta)$ if and only if $q$ is either a constant function or there exists some positive integer $m$, some $m$-ary term $u$, some $i\in\{1,\ldots,m\}$ and some $b_1,\ldots,b_{i-1},b_{i+1},\ldots,b_m\in A$ such that
\[
u([b_1]\Theta,\ldots,[b_{i-1}]\Theta,[x]\Theta,[b_{i+1}]\Theta,\ldots,[b_m]\Theta)=q([x]\Theta)
\]
for all $x\in A$. But
\[
u([b_1]\Theta,\ldots,[b_{i-1}]\Theta,[x]\Theta,[b_{i+1}]\Theta,\ldots,[b_m]\Theta)=[u(b_1,\ldots,b_{i-1},x,b_{i+1},\ldots,b_m)]\Theta.
\]
This shows
\begin{enumerate}
\item[(*)] $P_1(\mathbf A/\Theta)=\{[x]\Theta\mapsto[p(x)]\Theta\mid p\in P_1(\mathbf A)\}$.
\end{enumerate}
Now the following are equivalent:
\begin{align*}
& B^2\cup\omega_{A/\Theta}\in\Con(\mathbf A/\Theta), \\
& \text{if }x,y\in A, ([x]\Theta,[y]\Theta)\in B^2\cup\omega_{A/\Theta}\text{ and }p\in P_1(\mathbf A/\Theta)\text{ then} \\
& \hspace*{1cm} \big(p([x]\Theta),p([y]\Theta)\big)\in B^2\cup\omega_{A/\Theta}, \\
& \text{if }x,y\in A, ([x]\Theta,[y]\Theta)\in B^2\text{ and }q\in P_1(\mathbf A)\text{ then }\big([q(x)]\Theta),[q(y)]\Theta\big)\in B^2\cup\omega_{A/\Theta}, \\
& \text{if }(x,y)\in C^2\text{ and }q\in P_1(\mathbf A)\text{ then }\big(q(x),q(y)\big)\in C^2\cup\Theta, \\
& \text{if }(x,y)\in C^2\cup\Theta\text{ and }q\in P_1(\mathbf A)\text{ then }\big(q(x),q(y)\big)\in C^2\cup\Theta, \\
& C^2\cup\Theta\in\Con\mathbf A.
\end{align*}
Note that for proving the equivalence of the second and the third statement we use (*). Finally, if $B^2\cup\omega_{A/\Theta}\in\Con(\mathbf A/\Theta)$ then for all $a,b\in A$ the following are equivalent:
\begin{align*}
& ([a]\Theta,[b]\Theta)\in B^2\cup\omega_{A/\Theta}, \\
& ([a]\Theta,[b]\Theta)\in B^2\text{ or }([a]\Theta,[b]\Theta)\in\omega_{A/\Theta}, \\
& (a,b)\in C^2\text{ or }(a,b)\in\Theta, \\
& (a,b)\in C^2\cup\Theta, \\
& ([a]\Theta,[b]\Theta)\in(C^2\cup\Theta)/\Theta.
\end{align*}
\item For every $m$-ary fundamental operation and for every $a_1,\ldots,a_m\in C$ the following are equivalent: $f([a_1]\Theta,\ldots,[a_m]\Theta)\in B$; $[f(a_1,\ldots,a_m)]\in B$; $f(a_1,\ldots,a_m)\in C$.
\end{enumerate}
\end{proof}

Using Theorem~\ref{th4} we can characterize quasi-Rees quotient algebras as follows.

\begin{corollary}
Let $\mathbf A=(A,F)$ be an algebra and $\Theta\in\Con\mathbf A$. Then the following are equivalent:
\begin{enumerate}[{\rm(i)}]
\item $\mathbf A/\Theta$ is a quasi-Rees algebra.
\item Every congruence on $\mathbf A$ strictly including $\Theta$ has some class $C$ being a subuniverse of $\mathbf A$, but not a class of $\Theta$, such that $C^2\cup\Theta\in\Con\mathbf A$.
\end{enumerate}
\end{corollary}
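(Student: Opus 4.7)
The plan is to combine Theorem~\ref{th4} with the standard correspondence between congruences on $\mathbf A/\Theta$ and congruences on $\mathbf A$ that contain $\Theta$. Under this correspondence every congruence $\Psi$ on $\mathbf A/\Theta$ has the form $\Phi/\Theta$ for a unique $\Phi\in\Con\mathbf A$ with $\Theta\subseteq\Phi$, and the classes of $\Psi$ are exactly the sets $\{[x]\Theta:x\in D\}$ as $D$ ranges over the classes of $\Phi$. The non-triviality of $\Psi$ translates to $\Theta\subsetneq\Phi$, i.e.\ $\Phi$ strictly includes $\Theta$.

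The next step is to fix such a $\Phi$ and a class $B$ of $\Psi=\Phi/\Theta$, and set $C:=\bigcup_{X\in B}X$. Then $C$ is precisely the class of $\Phi$ corresponding to $B$, so every class of $\Phi$ arises in this way. The subuniverse $B$ is \emph{non-trivial} (i.e.\ $|B|>1$) if and only if $C$ contains more than one $\Theta$-class, which in turn is equivalent to $C$ not being a class of $\Theta$ itself.

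Having aligned these objects, I would invoke Theorem~\ref{th4}: part~(ii) gives that $B$ is a subuniverse of $\mathbf A/\Theta$ iff $C$ is a subuniverse of $\mathbf A$, and part~(i) gives that $B^2\cup\omega_{A/\Theta}\in\Con(\mathbf A/\Theta)$ iff $C^2\cup\Theta\in\Con\mathbf A$. Stringing the three translations together, the assertion "there exists a class $B$ of $\Psi$ that is a non-trivial subuniverse of $\mathbf A/\Theta$ with $B^2\cup\omega_{A/\Theta}\in\Con(\mathbf A/\Theta)$" becomes "there exists a class $C$ of $\Phi$ that is a subuniverse of $\mathbf A$, is not a class of $\Theta$, and satisfies $C^2\cup\Theta\in\Con\mathbf A$." Quantifying over non-trivial $\Psi\in\Con(\mathbf A/\Theta)$ on one side and over $\Phi\in\Con\mathbf A$ with $\Theta\subsetneq\Phi$ on the other then yields the equivalence of (i) and (ii).

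The work is essentially bookkeeping through the correspondence; the only delicate point is to match \emph{non-trivial subuniverse} on the quotient with \emph{not a class of $\Theta$} on $\mathbf A$, which is the reason condition~(ii) excludes the classes of $\Theta$ themselves. All the real content—that the subuniverse property and the condition $C^2\cup\Theta\in\Con\mathbf A$ are preserved by passing between $\mathbf A$ and $\mathbf A/\Theta$—is already supplied by Theorem~\ref{th4}, so no further computation is required.
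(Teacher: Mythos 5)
Your proposal is correct and follows essentially the same route as the paper: the authors likewise restate (i) via the definition of a quasi-Rees algebra and then translate back and forth using the correspondence $B\mapsto C:=\bigcup_{X\in B}X$, $C\mapsto B:=\{[x]\Theta\mid x\in C\}$ together with Theorem~\ref{th4}. Your write-up is in fact more explicit than the paper's about matching non-triviality of $B$ with ``$C$ is not a class of $\Theta$,'' which is the one point the paper leaves to the reader.
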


\begin{proof}
Consider the following statement:
\begin{enumerate}
\item[(iii)] Every non-trivial congruence on $\mathbf A/\Theta$ has some class $B$ being a non-trivial subuniverse of $\mathbf A/\Theta$ such that $B^2\cup\omega_{A/\Theta}\in\Con(\mathbf A/\Theta)$.
\end{enumerate}
(i) $\Leftrightarrow$ (iii): \\
This follows from the definition of a quasi-Rees algebra. \\
(iii) $\Rightarrow$ (ii): \\
Put $C:=\bigcup\limits_{X\in B}X$ and use Theorem~\ref{th4} (i). \\
(ii) $\Rightarrow$ (iii): \\
Put $B:=\{[x]\Theta\mid x\in C\}$ and use Theorem~\ref{th4} (i).
\end{proof}

Similarly, we can characterize quotient algebras having the one-block-property as follows.

\begin{theorem}
Let $\mathbf A=(A,F)$ be some algebra and $\Theta\in\Con\mathbf A$. Then the following are equivalent:
\begin{itemize}
\item $\mathbf A/\Theta$ has the one-block-property.
\item For every congruence $\Phi$ on $\mathbf A$ covering $\Theta$ there exists some subset $B$ of $A/\Theta$ with $(\bigcup\limits_{X\in B}X)^2\cup\Theta=\Phi$.
\end{itemize}
\end{theorem}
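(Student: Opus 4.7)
My plan is to pass the problem through the Correspondence Theorem. Congruences on $\mathbf A/\Theta$ correspond bijectively to congruences $\Phi\in\Con\mathbf A$ with $\Theta\subseteq\Phi$ via $\Phi\mapsto\Phi/\Theta$, and under this correspondence the atoms of $\BCon(\mathbf A/\Theta)$ are exactly the congruences $\Phi$ that cover $\Theta$. The classes of $\Phi/\Theta$ are precisely the sets $\{[x]\Theta\mid x\in Y\}$ with $Y$ a $\Phi$-class, and such a set is a singleton in $A/\Theta$ iff $Y$ is a single $\Theta$-class. Hence the one-block-property for $\mathbf A/\Theta$ translates to the following statement on $\mathbf A$: for every $\Phi$ covering $\Theta$, exactly one $\Phi$-class fails to equal a $\Theta$-class.

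For the forward implication, I would fix a cover $\Phi$ of $\Theta$, let $B\subseteq A/\Theta$ be the unique non-singleton class of $\Phi/\Theta$, and set $C:=\bigcup_{X\in B}X$. Then $C$ is exactly one $\Phi$-class (the one corresponding to $B$), so for $(a,b)\in\Phi$ either $[a]\Theta$ lies in $B$ (giving $a,b\in C$, i.e.\ $(a,b)\in C^2$) or its $\Phi/\Theta$-class is a singleton (giving $[a]\Theta=[b]\Theta$, i.e.\ $(a,b)\in\Theta$). The reverse containment is immediate from $C^2,\Theta\subseteq\Phi$, so $\Phi=C^2\cup\Theta$ with $C$ of the required form.

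For the converse, I take any atom $\Psi$ of $\BCon(\mathbf A/\Theta)$ and write $\Psi=\Phi/\Theta$ with $\Phi$ covering $\Theta$. By hypothesis $\Phi=C^2\cup\Theta$ for $C=\bigcup_{X\in B}X$ with $B\subseteq A/\Theta$, and $\Phi\neq\Theta$ forces $|B|\geq 2$ (otherwise $C$ is contained in a single $\Theta$-class, so $C^2\subseteq\Theta$ and $\Phi=\Theta$). A short case analysis on whether $a$ lies in $C$ shows the $\Phi$-classes are $C$ together with the $\Theta$-classes disjoint from $C$; translated back via the correspondence, the classes of $\Psi$ are $B$ (of size $\geq 2$) together with singletons, so $\Psi$ has exactly one non-singleton class. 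The only step requiring real care is verifying $C=[a]\Phi$ for $a\in C$ in the forward direction, which uses both that $B$ is a class of $\Phi/\Theta$ and that $C$ is a union of $\Theta$-classes; beyond that the argument is routine bookkeeping with the Correspondence Theorem.
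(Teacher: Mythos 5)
Your proposal is correct and follows essentially the same route as the paper: both pass through the Correspondence Theorem, identify atoms of $\BCon(\mathbf A/\Theta)$ with covers of $\Theta$, and translate the condition $\Phi/\Theta=B^2\cup\omega_{A/\Theta}$ into $\Phi=(\bigcup_{X\in B}X)^2\cup\Theta$. The paper merely phrases this as a single chain of equivalences for arbitrary subsets $B$, whereas you split it into the two implications and work directly with the non-singleton class; the content is the same.
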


\begin{proof}
For every congruence $\Phi$ on $\mathbf A$ including $\Theta$ we have: $\Phi/\Theta$ is an atom of $\BCon\mathbf A$ if and only if $\Phi$ covers $\Theta$. Now for every $a\in A$ and every subset $B$ of $A/\Theta$, $[a]\Theta\in B$ is equivalent to $a\in\bigcup\limits_{X\in B}X$. Hence for every subset $B$ of $A/\Theta$ the following are equivalent:
\begin{align*}
& \Phi/\Theta=B^2\cup\omega_{A/\Theta}, \\
& \text{For all }y,z\in A, ([y]\Theta,[z]\Theta)\in\Phi/\Theta\text{ if and only if }([y]\Theta,[z]\Theta\in B\text{ or }[y]\Theta=[z]\Theta), \\
& \text{For all }y,z\in A, (y,z)\in\Phi\text{ if and only if }\big(y,z\in\bigcup\limits_{X\in B}X\text{ or }(y,z)\in\Theta\big), \\
& \Phi=(\bigcup\limits_{X\in B}X)^2\cup\Theta.
\end{align*}
Thus the following are equivalent:
\begin{align*}
& \mathbf A/\Theta\text{ satisfies the one-block-property}, \\
& \text{For every congruence }\Phi\text{ on }\mathbf A\text{ covering }\Theta\text{ there exists some subset }B\text{ of }A/\Theta\text{ with} \\
& \hspace*{1cm} B^2\cup\omega_{A/\Theta}=\Phi/\Theta, \\
& \text{For every congruence }\Phi\text{ on }\mathbf A\text{ covering }\Theta\text{ there exists some subset }B\text{ of }A/\Theta\text{ with} \\
& \hspace*{1cm}(\bigcup\limits_{X\in B}X)^2\cup\Theta=\Phi.
\end{align*}
\end{proof}

Authors' addresses:

Ivan Chajda \\
Palack\'y University Olomouc \\
Faculty of Science \\
Department of Algebra and Geometry \\
17.\ listopadu 12 \\
771 46 Olomouc \\
Czech Republic \\
ivan.chajda@upol.cz

Helmut L\"anger \\
TU Wien \\
Faculty of Mathematics and Geoinformation \\
Institute of Discrete Mathematics and Geometry \\
Wiedner Hauptstra\ss e 8-10 \\
1040 Vienna \\
Austria, and \\
Palack\'y University Olomouc \\
Faculty of Science \\
Department of Algebra and Geometry \\
17.\ listopadu 12 \\
771 46 Olomouc \\
Czech Republic \\
helmut.laenger@tuwien.ac.at

\begin{thebibliography}9
\bibitem{AK}
P.~Aglian\`o and K.~A.~Kearnes, Congruence semimodular varieties. II. Regular varieties. Algebra Universalis {\bf32} (1994), 270--296.
\bibitem{CD}
I.~Chajda and J.~Duda, Rees algebras and their varieties. Publ.\ Math.\ Debrecen {\bf32} (1985), 17--22.
\bibitem{CEL}
I.~Chajda, G.~Eigenthaler and H.~L\"anger, Congruence Classes in Universal Algebra. Heldermann, Lemgo 2012. ISBN 3-88538-226-1.
\bibitem{CL}
I.~Chajda and H.~L\"anger, Directoids. An Algebraic Approach to Ordered Sets. Heldermann, Lemgo 2011. ISBN 978-3-88538-232-4.
\bibitem{C70}
B.~Cs\'ak\'any, Characterizations of regular varieties. Acta Sci.\ Math.\ (Szeged) {\bf31} (1970), 187--189.
\bibitem{C75}
B.~Cs\'ak\'any, Congruences and subalgebras. Ann.\ Univ.\ Sci.\ Budapest.\ E\"otv\"os Sect.\ Math.\ {\bf18} (1975), 37--44.
\bibitem{DMS}
B.~A.~Davey, K.~R.~Miles and V.~J.~Schumann, Quasi-identities, Mal'cev conditions and congruence regularity. Acta Sci.\ Math.\ (Szeged) {\bf51} (1987), 39--55.
\bibitem T
R.~F.~Tichy, The Rees congruence in universal algebras. Publ.\ Inst.\ Math.\ (Beograd) {\bf29} (1981), 229--239.
\end{thebibliography}
\end{document}